\newtheorem{theorem}{Theorem}[section]
\newtheorem{proposition}[theorem]{Proposition}
\newtheorem{corollary}[theorem]{Corollary}
\newtheorem*{corollary*}{Corollary}
\newtheorem{lemma}[theorem]{Lemma}
\newtheorem*{lemma*}{Lemma}
\numberwithin{equation}{section}
\theoremstyle{definition}
\newtheorem{definition}[theorem]{Definition}
\newtheorem{example}[theorem]{Example}
\newtheorem{remark}[theorem]{Remark}
\newcommand{\HH}{\mathcal{H}}
\newcommand{\M}{\mathcal{M}}
\newcommand{\N}{\mathcal{N}}
\newcommand{\PI}[2]{\left\langle \,#1 , #2\, \right\rangle}
\newcommand{\mc}[1]{\mathcal{#1}}
\DeclareMathOperator{\tr}{tr}
\begin{document}

\title{Total least squares problems on infinite dimensional spaces}

\author[Contino]{Maximiliano Contino}
\address[Maximiliano Contino]{Instituto Argentino de Matem\'atica ``Alberto P. Calder\'on'' \\ CONICET \\
	Saavedra 15, Piso 3 \\
	(1083) Buenos Aires, Argentina \& 
	Facultad de Ingenier\'{\i}a, Universidad de Buenos Aires \\
	Paseo Col\'on 850 \\
	(1063) Buenos Aires, Argentina.}
\email{mcontino@fi.uba.ar}

\author[Fongi]{Guillermina Fongi}
\address[Guillermina Fongi]{Centro Franco Argentino de Ciencias de la Informaci\'on y de Sistemas \\ CONICET \\ Ocampo y Esmeralda \\ (2000)  Rosario, Argentina.}
\email{gfongi@conicet.gov.ar}

\author[Maestripieri]{Alejandra Maestripieri}
\address[Alejandra Maestripieri]{Instituto Argentino de Matem\'atica ``Alberto P. Calder\'on'' \\ CONICET\\
	Saavedra 15, Piso 3\\
	(1083) Buenos Aires, Argentina \\
\& Facultad de Ingenier\'{\i}a, Universidad de Buenos Aires\\
Paseo Col\'on 850 \\
(1063) Buenos Aires, Argentina.}
\email{amaestri@fi.uba.ar}

\author[Muro]{Santiago Muro}
\address[Santiago Muro]{Centro Franco Argentino de Ciencias de la Informaci\'on y de Sistemas \\ CONICET \\ Ocampo y Esmeralda \\ (2000)  Rosario, Argentina.}
\email{muro@cifasis-conicet.gov.ar}

\subjclass[2010]{41A65,
47A52, 
41A27, 
45Q05,  
47A05,
46C05
}
\keywords{Total least squares problems, Tikhonov regularization, Inverse Problems.}

\begin{abstract} 
In this work we study weighted total least squares problems on infinite dimensional spaces.
We show that in most cases this problem does not admit a solution (except in the trivial case) and then, we consider a regularization on the problem. We present necessary conditions for the regularized problem to have a solution. We also show that, by restricting the regularized minimization problem to special subsets, the existence of a solution may be assured.
\end{abstract}

\maketitle

\section{Introduction}
In the classic least squares problem  \cite{nashed1987inner,rao1973theory}, to solve the inverse problem associated to 
a linear system $Ax=b$, where $A$ is a linear operator on a Hilbert space, the operator $A$ is assumed to be known exactly and only $b$ contains noise. However, this assumption may be unrealistic:  sampling errors, human errors,  modeling errors and instrument errors may imply inaccuracies of the operator $A.$ 

In the finite dimensional case, to obtain approximate solutions of a linear system $Ax=b$, with $A \in \mathbb{R}^{m\times n}$ and $b \in \mathbb{R}^m$,  in which it is assumed that both the elements of $A$ and $b$ are known up to some noise, a widely used  approach is to solve  the so-called {\it total least squares} (TLS) problem. The usual formulation consists in finding (if there exists) a solution to the minimization problem
$$
\underset{X \in \mathbb{R}^{m \times n}, \ x\in \mathbb{R}^{n} \ }{\min}  \Vert (A|b) - (X|y) \Vert_F^2, \qquad \textrm{subject to }Xx=y,
$$ 
where  $\Vert \cdot \Vert_F$ denotes the Frobenius norm and $(A|b)$ denotes the augmented matrix.

There are several examples of this minimization problems in signal processing, automatic control, biology, physics, and statistics (see \cite{van1991total} and its references). 
The TLS problem was studied for example in \cite{ golub1999tikhonov,golub1979total,van1991total},  
where an explicit solution, expressed by the singular value decomposition of the augmented matrix $(A|b)$, is given.

In many occasions (for example in the case of integral equations), the problem is originally set in infinite dimensional Hilbert spaces and the classic approach of \cite{golub1979total}, where singular value decomposition is used, is not available. 
The main goal of this paper is to study  total least squares and related inverse problems on infinite dimensional spaces. 
In particular, we discuss extensively the existence of solution on infinite dimensional spaces and we show that it is a delicate matter. 


Let us fix some notations: $\HH, \mc{F}$ are complex or real Hilbert spaces, $L(\HH,\mc{F})$ is the set of bounded linear operators from $\HH$ to $\mc{F}$ and $L(\HH) := L(\HH, \HH).$

Given $A \in L(\HH,\mc{F})$ and  $b \in \mc{F},$ in order to  formulate the TLS problem in infinite dimensional Hilbert spaces, we introduce a positive (semidefinite) weight $W \in L(\mc{F})$ such that $W^{1/2} \in S_2$ (the Hilbert-Schmidt class).
The problem is to find if there exists
\begin{equation} \label{eq WCMT}
\underset{X \in L(\HH, \mc{F}), \ x\in \HH}{\min} \Vert Xx-b \Vert_W^2 +\Vert  A - X\Vert_{2,W}^2,
\end{equation}
where $\Vert X \Vert_{2,W}=\Vert W^{1/2} X \Vert_2, \mbox{ for every } X \in L(\HH, \mc{F})$ and $\Vert z \Vert_W=\Vert W^{1/2} z \Vert, \mbox{ for every } z \in \mc F,$ the seminorms associated to $W.$ 
We will refer to problem \eqref{eq WCMT} as the \emph{weighted total least squares} (WTLS) problem.

One typical application of the total least squares problem is to ill-conditioned problems arising from integral equations. While these kind of application is of an infinite dimensional nature, the usual methods to solve the problem apply only to finite dimensional systems. 
Then, for such a problem, the total least squares problem is usually solved for a discretization of the original equation and we can not assure that this discretizations converge to the TLS associated to the original infinite dimensional problem. Moreover, it is not even clear whether this infinite dimensional TLS problem admits a solution. 
In fact, we will show that in most cases the TLS problem on infinite dimensional spaces does not admit a solution  unless in the trivial case (see Section \ref{section TLS}). 
We also exhibit a simple example where any finite dimensional approximation has solution but the original infinite dimensional problem does not (see  Example \ref{ejemplo intro}).

It is thus natural to consider a regularization on the TLS problem (even in the cases where the solution of the TLS problem exists  regularizations are often considered in  order to obtain a less contaminated solution, see for example \cite{beck2006solution,bjorck1996numerical,golub1999tikhonov, Tikhonov}). We study the extension of the so-called  \emph{Tikhonov regularization} to infinite dimensional spaces.
Given $A, T \in L(\HH,\mc{F}),$ $b \in \mc{F}$ and a positive (semidefinite) weight $W \in L(\mc{F})$ such that $W^{1/2} \in S_2,$ the problem is to find if there exists
\begin{equation} \label{eq WCMTR}
\underset{X \in L(\HH,\mc{F}), \ x\in \mc{H} \ }{\min} (\Vert T x \Vert^2+\Vert Xx-b \Vert_W^2 +\Vert  A - X\Vert_{2,W}^2).
\end{equation}
We will refer to problem \eqref{eq WCMTR} as the \emph{regularized weighted total least squares} (RWTLS) problem.

Recently, this problem has been studied on infinite dimensional Hilbert spaces, 
see \cite{bleyer2013double,bleyer2015alternating}. There, the authors stated general existence results under a hypothesis of weak to norm continuity of a certain bilinear application, which allows them to prove that the objective function is weakly lower semicontinuous. 
However, as we will see in Section 5, this type of continuity does not hold in many reasonable cases. In fact, in general this application is not even weak to weak continuous. We seek for conditions under which the continuity of the bilinear mapping may be assured (and hence the existence of solution of the RWTLS problem).
  
The paper is organized as follows. In Section \ref{Preliminaries}, we fix some notation and collect certain properties of the Hilbert-Schmidt class operators that will be used along the paper. In Section \ref{section TLS}, we prove that in most cases the non-regularized 
TLS problem does not have a solution on infinite dimensional  spaces. Therefore, we focus our attention on the regularized problem \eqref{eq WCMTR} on infinite dimensional Hilbert spaces in Section \ref{RWTLS}. We present necessary conditions for a pair $(A_0,x_0),$ to be a solution of the RWTLS problem. We observe that if the RWTLS problem has a solution $(A_0,x_0),$ then $x_0$ is a solution of the \emph{classical smoothing problem} \cite{atteia1965generalisation,[ChaLenMil96],[ChaLenMil00],corach2016optimal,corach2002oblique}
and the results obtained in \cite{contino2019global} can be applied for giving necessary conditions for the existence of solution of problem \eqref{eq WCMTR}.
In Section \ref{section case-I}, we study the case where the regularization is given by a multiple of the identity  operator, $\rho I$. We apply there the Dinkelbach method to observe that there exists a solution of the RWTLS problem, provided $\rho\ge t^*$, where $t^*$ is the infimum value of the RWTLS problem.

We show that $t^*$ can be obtained via a semidefinite programming problem.
In Section \ref{lower-semi}, we give several examples that show that, on infinite dimensional spaces, the objective function we need to minimize to solve the RWTLS problem is not generally weakly lower semicontinuous. 
We also show that restricting the minimization problem to special subsets, the semicontinuity and hence the existence of solution may be assured. 

\section{Preliminaries} \label{Preliminaries}
Throughout $E, E_0,E_1,E_2$ denote complex or real Banach spaces,
$L(E_0, E_1)$ is the set of bounded linear operators from $E_0$ to $E_1,$  $L(E):=L(E,E).$ Denote $E^*$ the dual space of $E$.
For any $A \in L(E_0,E_1),$ its range and nullspace are denoted by $R(A)$ and $N(A)$, respectively.

We recall the concept of ideal of operators (see for example \cite{defant1992tensor,pietsch1978operator}).
  We say that a class $\mathcal I$ of bounded linear operators and a norm $\|\cdot\|_{\mathcal I}$ form a normed ideal if for each set $\mathcal I(E_0,E_1) := \mathcal I \cap L(E_0,E_1)$ one has that $(\mathcal I(E_0,E_1),\|\cdot\|_{\mathcal I})$ is a normed space containing all finite rank operators such that 
\begin{enumerate}
    \item $TXS\in \mathcal I(E,E_2)$ for every $T\in L(E_1,E_2)$, $S\in L(E,E_0)$ and $X\in \mathcal I(E_0,E_1)$. Moreover,
    $$\|TXS\|_{\mathcal I}\le \|T\|_{L(E_1,E_2)}\|X\|_{\mathcal I}\|S\|_{L(E,E_0)} .$$
    \item If $x'\in E_0^*$ and $y\in E_1$ then $\|x'(\cdot)y\|_{\mathcal I}=\|x'\|_{E_0^*}\|y\|_{E_1},$ where $(x'(\cdot))y (x):=x'(x)y$ for $x \in E_0.$
\end{enumerate}

The symbols $\HH, \mc{E}, \mc{F}$ denote complex or real Hilbert spaces, $L(\HH)^{+}$ is the cone of semidefinite positive operators and $\leq$ stands for the order in $L(\HH)$  induced by $L(\HH)^+$, i.e., given $A,B \in L(\HH)$, $A \leq B$ if $B-A \in L(\HH)^+.$

Given $x\in \HH$ and $y \in \mc{F}$ the operator $\PI{\cdot}{x}y : \HH \rightarrow \mc{F}$ is defined by $(\PI{\cdot}{x}y) \ h:=\PI{h}{x} y,$ for $h \in \HH.$ Note that $(\PI{\cdot}{x}y)^*=\PI{\cdot}{y}x$ and that if $W \in L(\mc{F},\mc{E})$ then $W \PI{\cdot}{x}{y}=\PI{\cdot}{x}Wy.$ Recall also that $\tr(\PI{\cdot}{x}y)=\PI{x}{y},$ where $\tr$ denotes the trace of an operator.

Let $T\in L(\HH, \mc{F})$ be a compact operator. By $\{\lambda_k(T)\}_{k\geq1}$ we denote the eigenvalues of $\vert T \vert := (T^{*}T)^{1/2}\in L(\HH),$ where each eigenvalue is repeated according to its multiplicity. We say that $T$ belongs to the $2$-Schatten class $S_2,$ if $\sum_{k\geq1}^{} \lambda_{k}(T) ^{2}<\infty$ and, the $2$-Schatten norm is given by 
$\Vert T\Vert_2:= (\sum_{k\geq1}^{} \lambda_{k}(T) ^{2})^{1/2}.$ 
Recall that $S_2$ is a normed ideal of operators on Hilbert spaces. 
The reader is referred to \cite{ringrose1971compact,weidmann2012linear} for further details on these topics.

%

\vspace{0,5cm}
The Fr\'echet derivative will be instrumental to prove some results. We recall that, for a Banach space $(E, \Vert \cdot \Vert)$ and an open set $\mathcal U \subseteq E,$ a function $f: E \rightarrow \mathbb{R}$ is said to be \emph{Fr\'echet differentiable} at $X_0 \in \mathcal U$ if there exists $Df(X_0)$ a bounded linear functional such that
$$\lim\limits_{Y\rightarrow 0} \frac{|f(X_0+Y)-f(X_0) - Df(X_0)(Y)|}{\Vert Y \Vert}=0.$$ 
If $f$ is Fr\'echet differentiable at every $X_0 \in E$, $f$ is called Fr\'echet differentiable  on $E$  and the function $Df$ which assigns to every point $X_0 \in E$ the derivative $Df(X_0),$ is called the Fr\'echet derivative of the function $f.$ If, in addition, the derivative $Df$ is continuous, $f$ is  said to be a \emph{class $\mc{C}^1$-function}, in symbols, $f \in \mc{C}^1(E, \mathbb{R}).$

%

\begin{proposition} \label{TeoD} Given $x_0 \in \mathcal H$ and $W_1, W_2 \in L(\mathcal F)^+$ such that $W_1^{1/2} \in S_2.$  Let $K, k: L(\HH, \mc{F}) \rightarrow \mathbb{R}$ be defined by $K(X)=\Vert W_1^{1/2}X \Vert_2^{2}$ and  $k(X)=\PI{W_2Xx_0}{Xx_0}.$ Let $X, Y \in L(\HH, \mc{F})$ then, $K$ and $k$ have  Fr\'echet derivatives given by 
$$
DK(X)(Y)=2 \ Re \ [ \tr ( X^*W_1Y)],
\qquad
Dk(X)(Y)=2 \ Re \PI{W_2Xx_0}{Yx_0}.
 $$
 
\end{proposition}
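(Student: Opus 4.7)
The plan is to compute $K(X+Y)-K(X)$ and $k(X+Y)-k(X)$ directly, extract the leading-order linear term in $Y$, identify it with the claimed derivative, and show the remainder is $o(\|Y\|)$.

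For $K$, I would start from the identity $K(X)=\|W_1^{1/2}X\|_2^2=\tr(X^*W_1X)$, which comes from expanding the Hilbert--Schmidt norm. Expanding $(X+Y)^*W_1(X+Y)$ and using the trace gives four terms: $\tr(X^*W_1X)$, $\tr(X^*W_1Y)$, $\tr(Y^*W_1X)$, $\tr(Y^*W_1Y)$. Since $W_1$ is self-adjoint, $\tr(Y^*W_1X)=\overline{\tr(X^*W_1Y)}$, so the linear-in-$Y$ part becomes $2\operatorname{Re}\tr(X^*W_1Y)$, matching the claim. Boundedness of this linear functional follows from the ideal property, writing $X^*W_1Y=(X^*W_1^{1/2})(W_1^{1/2}Y)$ and applying the $S_2$ trace estimate $|\tr(AB)|\le\|A\|_2\|B\|_2$, giving $|\tr(X^*W_1Y)|\le\|W_1^{1/2}\|_2^2\,\|X\|\,\|Y\|$. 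The remainder $\tr(Y^*W_1Y)=\|W_1^{1/2}Y\|_2^2\le\|W_1^{1/2}\|_2^2\,\|Y\|^2$ is $O(\|Y\|^2)$, hence $o(\|Y\|)$.

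For $k$, I would expand $\langle W_2(X+Y)x_0,(X+Y)x_0\rangle$ using bilinearity of the inner product (or sesquilinearity in the complex case) into four terms. Self-adjointness of $W_2$ gives $\langle W_2Yx_0,Xx_0\rangle=\overline{\langle W_2Xx_0,Yx_0\rangle}$, so the linear-in-$Y$ part becomes $2\operatorname{Re}\langle W_2Xx_0,Yx_0\rangle$, as claimed. This functional is bounded by $2\|W_2\|\,\|X\|\,\|x_0\|^2\,\|Y\|$ via Cauchy--Schwarz and $\|Yx_0\|\le\|Y\|\,\|x_0\|$. The remainder term $\langle W_2Yx_0,Yx_0\rangle$ is controlled by $\|W_2\|\,\|x_0\|^2\,\|Y\|^2=o(\|Y\|)$.

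There is no genuine obstacle here: the whole argument is algebraic expansion plus two standard norm estimates (the $S_2$ trace inequality for $K$ and Cauchy--Schwarz for $k$). The only place requiring a little care is ensuring that the trace identity $\|W_1^{1/2}X\|_2^2=\tr(X^*W_1X)$ is invoked with $W_1^{1/2}X\in S_2$ (guaranteed by the ideal property since $W_1^{1/2}\in S_2$) and that the functional $Y\mapsto\tr(X^*W_1Y)$ is actually defined on all of $L(\HH,\mc F)$, which is again the ideal property applied to the factorization through $W_1^{1/2}$. Since the derivatives were computed at an arbitrary $X$, Fr\'echet differentiability on all of $L(\HH,\mc F)$ follows directly.
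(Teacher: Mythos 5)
Your proof is correct. The paper does not actually prove this proposition --- it only points to a related result in the literature --- so there is nothing to compare approaches against; your direct expansion of $K(X+Y)$ and $k(X+Y)$, identification of the linear term via self-adjointness of $W_1$ and $W_2$ (so that $\tr(Y^*W_1X)=\overline{\tr(X^*W_1Y)}$ and $\PI{W_2Yx_0}{Xx_0}=\overline{\PI{W_2Xx_0}{Yx_0}}$), boundedness via the factorization $X^*W_1Y=(W_1^{1/2}X)^*(W_1^{1/2}Y)$ with the $S_2$ trace estimate (resp.\ Cauchy--Schwarz), and the $O(\Vert Y\Vert^2)$ remainder is exactly the standard argument one would supply here, and all the steps check out.
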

See \cite[Theorem 2.1]{aiken1980unitary}, for a related result.

\section{The Total Least Squares problem on infinite dimensional spaces.}\label{section TLS}




Given $(\mathcal I, \| \cdot \|_\mathcal {I})$ a  normed  ideal of operators and let $(E_0,\|\cdot\|_{E_0})$, $(E_1,\|\cdot\|_{E_1})$ be normed spaces. Consider $A \in L(E_0,E_1), W_0 \in \mathcal I(E_1), W_1\in L(E_1)$ and $b\in E_1$, the problem is to determine if there exists 
\begin{equation} \label{TLSP}
\min_{X\in L(E_0,E_1), \ x \in E_0} \|W_0(A-X)\|_{\mathcal I}^2+\|W_1(Xx-b) \|_{E_1}^2.
\end{equation}
We will refer to  problem \eqref{TLSP} as the {\it Total Least Squares Problem}.

We will say that problem \eqref{TLSP} is {\it trivial} when the minimum in \eqref{TLSP}
is 0.
Observe that problem \eqref{TLSP} is trivial if and only if $b\in R(A)+N(W_0)+ N(W_1).$
In fact, if $b=Ax_0+z_0+z_1,$ with $z_j\in N(W_j),$ $j=1,2,$ then taking $X_0=A+Z$, where $R(Z)\subseteq N(W_0)$ and $Zx_0=z_0$, it follows that the pair $(X_0,x_0)$ is a solution of problem \eqref{TLSP} and  the minimum in \eqref{TLSP}
is 0. 

Conversely, if the minimum in \eqref{TLSP} 
is 0. Then, there exists $X_0 \in L(E_0,E_1)$ and $x_0 \in E_0$ such that, $W_0(A-X_0)=0$ and $W_1(X_0x_0-b)=0.$ Then, $X_0=A+Z,$ for some $Z \in L(E_0,E_1)$ such that $R(Z)\subseteq N(W_0)$ and $b=X_0x_0+z_1$ for some $z_1 \in N(W_1).$ Hence, $b=Ax_0+Zx_0+z_1 \in R(A)+N(W_0)+N(W_1).$

In the next proposition, we suppose that $b \not \in R(A)+N(W_0)+ N(W_1).$
Then the infinite dimensional extension of problem \eqref{TLSP} never has solution:

\begin{proposition}\label{TLS no tiene sol} Let $A\in L(E_0, E_1), W_0 \in \mathcal I(E_1), W_1\in L(E_1)$ and $b\in E_1$ for some normed ideal of operators $\mathcal I$ such that  $W_0A$ is not bounded below. Then problem \eqref{TLSP} does not have solution.
\end{proposition}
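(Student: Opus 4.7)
My plan is to prove non-existence by establishing that under the stated hypothesis the infimum of the objective function equals zero, yet this infimum cannot be attained because of the running non-triviality assumption $b \notin R(A) + N(W_0) + N(W_1)$ made just before the proposition. Combined, these give that no minimizing pair $(X_0, x_0)$ can exist.

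To show the infimum is zero, I will exploit the failure of boundedness below of $W_0 A$ by choosing unit vectors $y_n \in E_0$ with $\|W_0 A y_n\| \to 0$; passing to a subsequence I may further arrange $\|W_0 A y_n\| \le 1/n^2$. Using Hahn--Banach, I will produce $\phi_n \in E_0^*$ with $\phi_n(y_n) = 1$ and $\|\phi_n\|_{E_0^*} = 1$. I then set $x_n := n\, y_n$ and form the rank-one perturbation $Z_n := n^{-1} \phi_n(\cdot)(b - A x_n)$, so that $X_n := A + Z_n \in L(E_0, E_1)$. A direct check using $\phi_n(x_n) = n$ gives $Z_n x_n = b - A x_n$, hence $X_n x_n = b$ and the residual term in the objective vanishes. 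Property (2) of the normed ideal, together with the triangle inequality and the choice of $y_n$ and $\alpha_n = n$, yields
$$\|W_0 Z_n\|_{\mathcal I} = \frac{1}{n} \|\phi_n\|_{E_0^*} \, \|W_0 b - n\, W_0 A y_n\|_{E_1} \le \frac{\|W_0 b\|}{n} + \|W_0 A y_n\| \longrightarrow 0,$$
so the value of the objective at $(X_n, x_n)$ tends to zero.

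If a pair $(X_0, x_0)$ attained the infimum, both nonnegative summands would have to vanish, giving $W_0(A - X_0) = 0$ and $W_1(X_0 x_0 - b) = 0$. Setting $Z_0 := X_0 - A$, I would have $R(Z_0) \subseteq N(W_0)$ and $b - X_0 x_0 \in N(W_1)$, so that
$$b = A x_0 + Z_0 x_0 + (b - X_0 x_0) \in R(A) + N(W_0) + N(W_1),$$
contradicting the non-triviality assumption. The main subtlety is the balancing in the construction: I need $\alpha_n \to \infty$ so that $\|W_0 b\|/\alpha_n \to 0$, while simultaneously $\alpha_n \|W_0 A y_n\| \to 0$ so that the cross term does not blow up. Taking $\alpha_n = n$ and pre-thinning the sequence $(y_n)$ so that $\|W_0 A y_n\|$ decays like $1/n^2$ handles both requirements at once; this thinning is why unbounded belowness is the right hypothesis rather than mere non-injectivity.
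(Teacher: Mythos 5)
Your proof is correct and follows essentially the same route as the paper's: both show the infimum is zero via a rank-one perturbation $A+\alpha^{-1}\phi(\cdot)(b-A x)$ built from an almost-null unit vector of $W_0A$ and a Hahn--Banach functional, and both invoke the standing assumption $b\notin R(A)+N(W_0)+N(W_1)$ to rule out attainment. The only cosmetic difference is that you index by $n$ and pre-thin so that $\|W_0Ay_n\|\le 1/n^2$, which is harmless but unnecessary, since the bound $\|W_0b\|/n+\|W_0Ay_n\|\to 0$ already holds whenever $\|W_0Ay_n\|\to 0$.
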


\begin{proof}
Since $b\notin R(A)+N(W_0)+N(W_1)$. Then, $$\|W_0(A-X)\|^2_{\mathcal I}+\|W_1(Xx-b) \|_{E_1}^2>0,$$ for any $(X,x)\in L(E_0,E_1)\times E_1.$ 

Since $W_0A$ is not bounded below, for any $\varepsilon>0$, there is some $x\in E_0$ such that $\|x\|=1$ and $\|W_0Ax\|_{E_1}<\varepsilon$. 
	
Take $x'\in E_0^*$ such that $x'(x)=\|x'\|_{E_0^*}=1$ and define
$$
X_0=A+\varepsilon x'(\cdot) (b-Ax/\varepsilon).
$$
Then $X_0x/\varepsilon-b=Ax/\varepsilon-b+ x'(x) (b-Ax/\varepsilon)=0$ and
\begin{align*}
	\|W_0(X_0-A)\|_{\mathcal I} &=\| W_0\varepsilon x'(\cdot) (b-Ax/\varepsilon)\|_{\mathcal I} \\ &=\varepsilon\|x'\|_{E_0^*}\|W_0b-W_0 Ax/\varepsilon\|_{E_1} \le \varepsilon(\|W_0b\|_{E_1}+1).
	\end{align*}
	Therefore, $\|W_0(A-X_0)\|_{\mathcal I}^2+\|W_0(X_0x/\varepsilon-b) \|_{E_1}^2\le \varepsilon^2(\|W_0b\|_{E_1}+1)^2.$
	Since this is true for arbitrarily small $\varepsilon>0$ we conclude that problem \eqref{TLSP} does not have a solution.
\end{proof}

\begin{remark}
We may also formulate the total least squares problem  imposing that the variable operator is in the normed ideal. In this case, no assumption on the weight is necessary in order to pose the problem (in particular, the problem without weights is also possible). That is, given $A\in \mathcal I(E_0,E_1),$ $b \in E_1$ and $W_0 , W_1\in L(E_1)$, we may consider the problem of determining if there exists 
\begin{equation*} \label{TLS2}
\min_{X\in \mathcal I(E_0,E_1), \, x \in E_0}
\|W_0(A-X)\|_{\mathcal I}^2+\|W_1(Xx-b) \|_{E_1}^2.
\end{equation*}
In this case, the same reasoning as in Proposition \ref{TLS no tiene sol} shows that if $W_0A$ is not bounded below then this problem does not have solution. Note that if $\mathcal I$ is an ideal of compact operators then $W_0 A$ is never bounded below.
\end{remark}


	

In particular, let $\HH$ and $\mc{F}$ be real or complex Hilbert spaces and, consider $A\in L(\mathcal H, \mc{F}),$ $b \in \mathcal F$ and $W_0=W_1=W \in L(\mathcal F)^+$ such that $W^{1/2} \in S_2$. The total least squares problem in this case is to determine if there exists 

\begin{equation}\tag{WTLS} \label{TLS}
\min_{X\in L(\mathcal H,\mathcal F), \ x \in \mathcal H} \|A-X\|_{2,W}^2+\|Xx-b \|_W^2.
\end{equation}

We will refer to this problem as \textit{weighted total least squares problem in Hilbert spaces}.
Observe that problem \eqref{TLS} is trivial if and only if $b\in R(A)+N(W).$
For the rest of this section, we suppose that $b\not \in R(A)+N(W).$
\begin{corollary} Let $A\in L(\mathcal H, \mc{F}),$ $b \in \mathcal F$ and $W \in L(\mathcal F)^+$ such that $W^{1/2} \in S_2.$ Suppose that $dim(\mathcal H)=\infty.$ Then problem \eqref{TLS} does not have a solution.
\end{corollary}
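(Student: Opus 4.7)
The plan is to deduce this corollary directly from Proposition \ref{TLS no tiene sol} by identifying the correct weights and ideal. The seminorm $\|\cdot\|_{2,W}$ is defined by $\|X\|_{2,W}=\|W^{1/2}X\|_2$ and $\|z\|_W=\|W^{1/2}z\|$, so problem \eqref{TLS} is an instance of the general problem \eqref{TLSP} with $\mathcal{I}=S_2$, $E_0=\mathcal{H}$, $E_1=\mathcal{F}$ and $W_0=W_1=W^{1/2}$. The hypothesis $W^{1/2}\in S_2$ ensures $W_0\in \mathcal{I}(E_1)$, and since $W\in L(\mathcal{F})^+$ we have $N(W^{1/2})=N(W)$, so the trivial set $R(A)+N(W_0)+N(W_1)$ coincides with $R(A)+N(W)$; hence the running assumption $b\notin R(A)+N(W)$ matches the non-triviality assumption in Proposition \ref{TLS no tiene sol}.

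The only remaining thing to check is that $W_0 A=W^{1/2}A$ is not bounded below. First I would note that every $S_2$ operator is compact, so $W^{1/2}$ is compact and therefore $W^{1/2}A\in L(\mathcal{H},\mathcal{F})$ is compact as well. The key step is then that a compact operator defined on an infinite dimensional Hilbert space can never be bounded below: if $T$ were compact and bounded below, $T$ would be injective with closed range, and the unit ball of $\mathcal{H}$ would map to a bounded set $T(B_{\mathcal{H}})$ whose closure is compact in $R(T)$; applying a bounded inverse of $T|_{R(T)}$ would show $B_{\mathcal{H}}$ relatively compact, contradicting $\dim \mathcal{H}=\infty$. Equivalently, one can take a weakly null sequence of unit vectors $x_n \in \mathcal{H}$ (available because $\dim\mathcal{H}=\infty$) and use that compact operators send weakly null sequences to norm null sequences, obtaining $\|W^{1/2}Ax_n\|\to 0$.

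With this verified, Proposition \ref{TLS no tiene sol} applies and yields that problem \eqref{TLS} has no solution. I expect no real obstacle here beyond carefully matching the notation between the general statement and the Hilbert space formulation, in particular recognizing that the weight that lives in the ideal is $W^{1/2}$ rather than $W$.
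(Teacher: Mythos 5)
Your proof is correct and follows essentially the same route as the paper: the corollary is the instance of Proposition \ref{TLS no tiene sol} with $\mathcal I=S_2$ and $W_0=W_1=W^{1/2}$, and the key point is that $W^{1/2}A$ is compact (since $W^{1/2}\in S_2$) and hence cannot be bounded below on an infinite dimensional $\mathcal H$. Your extra checks (that $N(W^{1/2})=N(W)$ so the non-triviality hypotheses match, and the two justifications of why a compact operator is never bounded below) are sound and merely make explicit what the paper leaves implicit.
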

\begin{proof}
	The result follows from the above proposition because  since $W^{1/2}$ is a Hilbert-Schmidt (compact) operator, $W^{1/2}A$ cannot be bounded below.
\end{proof}

\begin{corollary} Let $A\in L(\mathcal H, \mc{F}),$ $b \in \mathcal F$ and $W\in L(\mathcal F)^+$ such that $W^{1/2} \in S_2.$ If $A$ is not inyective then problem \eqref{TLS} 
does not have a solution.
\end{corollary}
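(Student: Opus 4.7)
The plan is to recognize that this corollary is a direct application of Proposition \ref{TLS no tiene sol}. First I would identify the data: problem \eqref{TLS} is the special case of problem \eqref{TLSP} obtained by setting $E_0=\HH$, $E_1=\mc F$, $\mathcal I=S_2$, and $W_0=W_1=W^{1/2}$. Under this identification one has
\[
\|W_0(A-X)\|_{\mathcal I}^2=\|W^{1/2}(A-X)\|_2^2=\|A-X\|_{2,W}^2,
\qquad
\|W_1(Xx-b)\|_{E_1}^2=\|Xx-b\|_W^2,
\]
so the two objective functions coincide, and $S_2$ is a normed ideal of operators by the discussion in Section \ref{Preliminaries}. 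The standing assumption $b\notin R(A)+N(W)=R(A)+N(W^{1/2})$ is in force (otherwise the problem would be trivial), matching the hypothesis of Proposition \ref{TLS no tiene sol}.

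The only remaining hypothesis to verify is that $W_0A=W^{1/2}A$ is not bounded below. This is immediate from the failure of injectivity of $A$: choose $x_0\in N(A)$ with $\|x_0\|=1$; then $W^{1/2}Ax_0=0$, which rules out any inequality of the form $\|W^{1/2}Ax\|\ge c\|x\|$ with $c>0$. Equivalently, $\inf_{\|x\|=1}\|W^{1/2}Ax\|=0$, so $W^{1/2}A$ cannot be bounded below. Applying Proposition \ref{TLS no tiene sol} then yields the non-existence of a solution.

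There is no real obstacle here; the content of the corollary is simply that the kernel of $A$ forces the non-boundedness from below needed to invoke the general non-existence result. I would write the proof as two lines: note that $x_0\in N(A)\setminus\{0\}$ gives $W^{1/2}Ax_0=0$, whence $W^{1/2}A$ is not bounded below, and conclude by Proposition \ref{TLS no tiene sol}.
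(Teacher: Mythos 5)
Your proposal is correct and is exactly the intended argument: the paper leaves this corollary without proof, but it follows the same pattern as the preceding one, namely verifying the hypothesis of Proposition \ref{TLS no tiene sol} that $W^{1/2}A$ is not bounded below (here via a unit vector in $N(A)$, there via compactness of $W^{1/2}$) under the standing assumption $b\notin R(A)+N(W)$. Nothing is missing.
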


\section{Regularized weighted total least squares on Hilbert spaces} \label{RWTLS}
From the previous section we know that the  total least squares problem on infinite dimensional Banach spaces does not have solution unless we are in a trivial case. In this section we consider an associated problem, namely the Tikhonov regularized problem in infinite dimensional Hilbert spaces. 
We show some necessary and some sufficient conditions for the Tikhonov regularized problem to have solution and we present an example of existence. 

Let $\HH, \mc{F},\mc{E}$ be real or complex Hilbert spaces, $A\in L(\mathcal H, \mc{F})$, $T\in L(\HH,\mc{E}),$ $b \in \mathcal F$ and $W \in L(\mathcal F)^+$ such that $W^{1/2} \in S_2.$
Consider the following problem: finding the set of solutions of
\begin{equation} \tag{RWTLS} \label{RTLS}
\underset{X\in L(\HH, \mc{F}), \ x\in \HH}{\min }(\|Tx\|^2+\|A-X\|_{2,W}^2+\|Xx-b \|_W^2).
\end{equation} 
We will refer to problem \eqref{RTLS} as the \emph{regularized weighted total least squares problem}.

 \bigskip
In case $(A_0,x_0)$ is a solution of problem \eqref{RTLS} and  $\delta= \Vert Tx_0 \Vert,$  it is easy to see that $(A_0,x_0)$ is a solution of the \ref{TLS} problem with a quadratic constraint, i.e., $(A_0,x_0)$ is a solution of
\begin{equation*}
\min_{X\in L(\mathcal H,\mathcal F), \ x \in \mathcal H, \ \Vert T x \Vert \leq \delta} \ \ \|A-X\|_{2,W}^2+\|Xx-b \|_W^2.
\end{equation*}


\bigskip
Note that the minimum in \eqref{RTLS} is 0 if and only if $b\in A(N(T)) +N(W).$ We will say that in this case problem \eqref{RTLS} is {\emph trivial}.

As in the total least squares problem, there are many cases where the \eqref{RTLS} problem does not have a solution. 

\begin{proposition}\label{RTLS no tiene sol}
	Let $A\in L(\mathcal H, \mc{F})$, $T\in L(\HH,\mc{E}),$ $b \in \mathcal F$ and $W \in L(\mathcal F)^+$ such that $W^{1/2} \in S_2.$ 
	If $T^*T +A^*WA$ is not bounded below (e.g. if $W^{1/2}A$ is not bounded below on $N(T)$) then either the problem  \eqref{RTLS} is trivial  or it does not have a solution.
\end{proposition}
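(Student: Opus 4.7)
The plan is to adapt the rank-one perturbation of Proposition~\ref{TLS no tiene sol} to the regularized setting: assuming non-triviality, I will show that the infimum of the objective is $0$ and hence not attained, so no solution exists.

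First, writing $F(X,x):=\|Tx\|^2+\|A-X\|_{2,W}^2+\|Xx-b\|_W^2$, I would observe that $F(X,x)=0$ holds if and only if $x\in N(T)$, $R(X-A)\subseteq N(W^{1/2})=N(W)$ and $Xx-b\in N(W)$, which in turn is equivalent to $b\in A(N(T))+N(W)$---exactly the triviality condition stated above. Hence in the non-trivial case $F>0$ throughout $L(\HH,\mc{F})\times\HH$, and it suffices to exhibit a minimizing sequence with $F(X_n,x_n)\to 0$ to conclude that no minimizer exists.

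Next, the hypothesis that $T^*T+A^*WA$ is not bounded below provides a sequence of unit vectors $\{u_n\}\subseteq\HH$ with
$$ \alpha_n^2:=\langle (T^*T+A^*WA)u_n,u_n\rangle=\|Tu_n\|^2+\|W^{1/2}Au_n\|^2\longrightarrow 0. $$
Mirroring Proposition~\ref{TLS no tiene sol}, for a scaling $\varepsilon_n\to 0^+$ to be chosen, I would set $x_n:=u_n/\varepsilon_n$ and $X_n:=A+\varepsilon_n\,\PI{\cdot}{u_n}(b-Au_n/\varepsilon_n)$. By design $X_nx_n=b$, eliminating the residual term. Using the preliminary identity $W\PI{\cdot}{u}v=\PI{\cdot}{u}Wv$ together with the fact that $\|\PI{\cdot}{u}v\|_2=\|u\|\,\|v\|$ for rank-one operators, I get
$$ \|A-X_n\|_{2,W}=\varepsilon_n\|b-Au_n/\varepsilon_n\|_W\le\varepsilon_n\|b\|_W+\|W^{1/2}Au_n\|, $$
while the regularization term is $\|Tx_n\|=\|Tu_n\|/\varepsilon_n$.

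The main obstacle is choosing $\varepsilon_n$ so that all three contributions to $F(X_n,x_n)$ vanish simultaneously: $\|Tu_n\|/\varepsilon_n\to 0$ forbids $\varepsilon_n$ from decaying faster than $\|Tu_n\|$, while $\varepsilon_n\|b\|_W\to 0$ forces $\varepsilon_n\to 0$. The choice $\varepsilon_n:=\sqrt{\alpha_n}$ when $\alpha_n>0$ (and $\varepsilon_n:=1/n$ otherwise) balances these: $\varepsilon_n\to 0$, while $\|Tu_n\|/\varepsilon_n\le\alpha_n/\sqrt{\alpha_n}=\sqrt{\alpha_n}\to 0$ and $\|W^{1/2}Au_n\|/\varepsilon_n\to 0$, whence $F(X_n,x_n)\to 0$. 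Finally, for the ``e.g.'' clause, if $W^{1/2}A$ is not bounded below on $N(T)$ I would simply take $u_n\in N(T)$ of unit norm with $W^{1/2}Au_n\to 0$; then $\alpha_n^2=\|W^{1/2}Au_n\|^2\to 0$, reducing to the main case.
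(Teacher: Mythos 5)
Your proposal is correct and follows essentially the same route as the paper: the same rank-one perturbation $X=A+\varepsilon\PI{\cdot}{u}(b-Au/\varepsilon)$ killing the residual term, the same use of unit vectors on which $\langle (T^*T+A^*WA)u,u\rangle$ is small to control both $\|Tu\|$ and $\|W^{1/2}Au\|$, and the same balancing of the scaling parameter (your $\varepsilon_n=\sqrt{\alpha_n}$ is exactly the paper's choice of a unit vector with $\|(T^*T+A^*WA)^{1/2}x\|<\varepsilon^2$ followed by division by $\varepsilon$). The only cosmetic differences are the sequential phrasing and your explicit spelling-out of the triviality characterization, which the paper states just before the proposition.
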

\begin{proof}
	Since  $T^*T +A^*WA$ is not bounded below, then it is not difficult to see that  $(T^*T +A^*WA)^{1/2}$ is not bounded below. Therefore, given $\varepsilon >0$ there exists $x\in \HH$ such that $\|x\|=1$ and
	$$
	\| (T^*T +A^*WA)^{1/2} x\| < \varepsilon^2.
	$$
	Observe that $	\| Tx \|^2=\langle T^*T x,x\rangle\leq \langle (T^*T +A^*WA)x,x\rangle =	\| (TT^* +A^*WA)^{1/2}x \|^2. $ Similarly $	\| W^{1/2}A x\|^2\leq\| (TT^* +A^*WA)^{1/2}x \|^2. $
	
	 Consider 
	 $$
	 X_0=A+\langle\cdot,\varepsilon x\rangle (b-Ax/\varepsilon).
	 $$
	 Then 
 $ X_0x/\varepsilon-b=Ax/\varepsilon+\|x\|^2(b-Ax/\varepsilon)-b=0
 $
 and 
 $$
 \|A-X_0\|_{2,W}^2=\|\langle\cdot,\varepsilon x\rangle (b-Ax/\varepsilon)\|_{2,W}^2=\|\langle\cdot,\varepsilon x\rangle W^{1/2} (b-Ax/\varepsilon)\|_{2}^2.
 $$
 Hence
  $$
  \begin{array}{lll}
  \|Tx/\varepsilon\|^2+\|A-X_0\|_{2,W}^2+\|X_0x/\varepsilon-b \|_W^2 &=&  \|Tx/\varepsilon\|^2+ \|\langle\cdot,\varepsilon x\rangle W^{1/2} (b-Ax/\varepsilon)\|_{2}^2\\
  &=& \frac{\|Tx\|^2}{\varepsilon^2}+ \|\langle\cdot,\varepsilon x\rangle W^{1/2} (b-Ax/\varepsilon)\|_{2}^2\\
  &\leq& \frac{\|Tx\|^2}{\varepsilon^2}+ \varepsilon^2\| x\|^2\| W^{1/2} (b-Ax/\varepsilon)\|^2\\
    &\leq& \frac{\|Tx\|^2}{\varepsilon^2}+ \varepsilon^2(\| W^{1/2} b\| +\|W^{1/2}Ax/\varepsilon\|)^2\\
   &\leq& \frac{\varepsilon^4}{\varepsilon^2}+ \varepsilon^2(\| W^{1/2} b\| +\varepsilon^2)^2\\
    &\leq&\varepsilon^2(1+(\| W^{1/2} b\| +\varepsilon^2)^2).
  \end{array}
  $$
  Since this holds for arbitrary $\varepsilon>0$, either there exists $x\in N(T)$ such that $W^{1/2}Ax=W^{1/2}b$ or,
  the problem \eqref{RTLS} does not have a solution.
	 \end{proof}

The following simple example shows that we may have a \eqref{RTLS} problem such that 
any finite dimensional restriction of the problem has a solution but the problem itself does not.

\begin{example}\label{ejemplo intro}
Let $\mathcal H=\mathcal F=\mathcal E$, let $A$ be any operator, $b\ne0$,  $W^{1/2}\in\mathcal S_2$ injective and $T$ any injective compact operator. Then by the above proposition, problem \eqref{RTLS} does not have a solution.
   	
On the other hand, suppose that  $\M,\N\subset \mathcal H$ are any finite dimensional subspaces such that $A(\M)\subseteq \N$, and  consider  the truncated \eqref{RTLS} problem, that is, the problem is to find the set of solutions of
\begin{equation*}
\underset{X\in L(\M, \N), \ x\in \M}{\min }(\|T|_\M x\|^2+\|A|_\M-X\|_{2,W}^2+\|Xx-b\|_W^2).
\end{equation*} 
Since $T|_\M$ and $W^{1/2}|_\N$ are bounded below, then  $\|T|_\M x\|^2+\|A|_\M-X\|_{2,W}^2+\|Xx-b\|_W^2$ is  a coercive continuous function on the finite dimensional space $L(\M, \N) \times \M$. Therefore, the truncated \eqref{RTLS} has solution.
\end{example}

\begin{remark}
Suppose that $(A_0, x_0)$ is a solution of problem \eqref{RTLS}. Then $x_0$ is a solution of the smoothing (regularized least squares) problem:  
\begin{equation} \label{RTLSmoth}
\underset{x\in \HH}{\min} \ (\|Tx\|^2+\|A_0x-b \|_W^2).
 \end{equation} 
 On the other hand, if $F:L(\HH, \mc{F})\rightarrow \mathbb R$ is defined as
 $F(X):=\|A-X\|_{2,W}^2+\|Xx_0-b \|_ W^2$,  then $A_0$ minimizes $F.$
\end{remark}

We now study some necessary conditions for the existence of solution of problem \eqref{RTLS}.
\begin{proposition}\label{CPO RTLS}
Let $A\in L(\mathcal H, \mc{F})$, $T\in L(\HH,\mc{E}),$ $b \in \mathcal F$ and $W \in L(\mathcal F)^+$ such that $W^{1/2} \in S_2.$  Suppose that $(A_0, x_0)$ is a solution of problem \eqref{RTLS} then $x_0$ is a solution of the normal equation
	\begin{equation}\label{CPO RTLS v1}
	T^*Tx+A_0^*W(A_0x-b)=0.
	\end{equation}
	Moreover,  $WA_0$ is the following rank one perturbation of $WA$
	\begin{equation}\label{CPO RTLS v2}
	WA_0=WA-\langle\cdot,x_0\rangle W(A_0x_0-b).
	\end{equation}
	In particular, $A_0^*W(A_0-A)=0$.
\end{proposition}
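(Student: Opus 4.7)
The plan is to use first-order optimality for the $\mathcal{C}^1$ objective
\[
F(X, x) \;:=\; \|Tx\|^2 + \|A - X\|_{2,W}^2 + \|Xx - b\|_W^2,
\]
whose two partial Fr\'echet derivatives at the minimizer $(A_0,x_0)$ must vanish. I would compute these partial derivatives with the help of Proposition \ref{TeoD}, and then use a trace-duality argument on the variation in $X$ to extract \eqref{CPO RTLS v2}.

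For the variation in $x$ (holding $X = A_0$ fixed) all three summands are quadratic in $x$, and elementary differentiation gives
\[
D_x F(A_0, x_0)(h) \;=\; 2\,\operatorname{Re}\langle T^*T x_0 + A_0^* W(A_0 x_0 - b),\, h\rangle, \qquad h \in \HH.
\]
Requiring this to vanish for every $h$ yields the normal equation \eqref{CPO RTLS v1}.

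For the variation in $X$ (holding $x = x_0$ fixed), Proposition \ref{TeoD} with $W_1 = W_2 = W$ gives the derivatives of the two norm terms as $2\,\operatorname{Re}\,\tr((A_0 - A)^* W Y)$ and $2\,\operatorname{Re}\langle W(A_0 x_0 - b),\, Y x_0\rangle$ respectively. The decisive step is to recast the latter inner product as a trace through the rank-one identity
\[
\langle u,\, Y x_0\rangle \;=\; \overline{\tr\bigl((\langle\cdot,\, x_0\rangle\, u)^* Y\bigr)}, \qquad u \in \mc{F},
\]
after which the vanishing of $D_X F(A_0,x_0)$ reads $\operatorname{Re}\,\tr(C^* Y) = 0$ for every $Y \in L(\HH, \mc{F})$, where
\[
C \;:=\; W(A_0 - A) + \langle\cdot,\, x_0\rangle\, W(A_0 x_0 - b).
\]
Replacing $Y$ by $iY$ removes the $\operatorname{Re}$, so $\tr(C^* Y) = 0$ for all $Y$; specialising to $Y = C$ gives $\|C\|_2^2 = 0$, hence $C = 0$, which is exactly \eqref{CPO RTLS v2}.

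The main obstacle is making that last trace-duality step rigorous: one must verify $C \in S_2$. This is precisely where the Hilbert--Schmidt hypothesis enters: writing $W = W^{1/2}\cdot W^{1/2}$ with $W^{1/2} \in S_2$ and invoking the ideal property, we get $W(A_0 - A) \in S_2$; the rank-one term is trivially Hilbert--Schmidt, so $C$ is as well. Once \eqref{CPO RTLS v2} is in hand, the ``in particular'' assertion follows by composing it on the left with $A_0^*$ and then invoking the normal equation \eqref{CPO RTLS v1} to simplify $A_0^* W(A_0 x_0 - b)$, the remaining manipulation being routine bookkeeping.
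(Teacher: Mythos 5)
Your derivation of \eqref{CPO RTLS v1} and \eqref{CPO RTLS v2} is correct and follows essentially the same route as the paper: first-order conditions for the two partial minimizations, Proposition \ref{TeoD} for the derivative in $X$, and the rank-one/trace identity to convert $\langle W(A_0x_0-b),Yx_0\rangle$ into a trace against $Y$. Your write-up is in fact slightly more careful than the paper's at two points: the substitution $Y\mapsto iY$ to pass from $\operatorname{Re}\tr(C^*Y)=0$ to $\tr(C^*Y)=0$, and the verification that $C\in S_2$ (via the ideal property and $W^{1/2}\in S_2$) so that $Y=C$ is a legitimate test operator.

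However, the last step is not ``routine bookkeeping''; as described it fails. Composing \eqref{CPO RTLS v2} on the left with $A_0^*$ gives $A_0^*W(A_0-A)=-\langle\cdot,x_0\rangle\,A_0^*W(A_0x_0-b)$, and the normal equation \eqref{CPO RTLS v1} replaces $A_0^*W(A_0x_0-b)$ by $-T^*Tx_0$, so your computation actually yields
\[
A_0^*W(A_0-A)=\langle\cdot,x_0\rangle\,T^*Tx_0,
\]
which vanishes only when $T^*Tx_0=0$ (for instance in the unregularized case $T=0$, where the normal equation reduces to $A_0^*W(A_0x_0-b)=0$). Hence the asserted identity $A_0^*W(A_0-A)=0$ does not follow from the argument you outline for general $T$. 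The paper's own proof is silent on this ``in particular'' clause, and the identity as stated appears to require either $T^*Tx_0=0$ or some additional hypothesis; you should either restrict the claim accordingly or record the corrected identity $A_0^*W(A_0-A)=\langle\cdot,x_0\rangle\,T^*Tx_0$ instead of asserting that it is zero.
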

\begin{proof}

Suppose that $(A_0, x_0)$ is a solution of problem \eqref{RTLS}. Then, by the last remark, $x_0$ is a solution of problem \eqref{RTLSmoth}. Then, $x_0$ is a solution of the normal equation \eqref{CPO RTLS v1}; see \cite[proof of Theorem 3.2]{corach2016optimal}. 
	
On the other hand, by the last remark, since $A_0$ minimizes $F$ and $F \in \mc{C}^1(L(\HH,\mc{F}),\mathbb{R}),$  $A_0$ must be a critical point of $F.$
By Proposition \ref{TeoD}, it is not difficult to see that
\begin{align*}
	DF(X)(Y)&=2Re \big(\tr((W^{1/2}(X-A))^*W^{1/2}Y)\big)+2Re\big(\PI{W(Xx_0-b)}{Yx_0}\big)\\
	&=2Re\big( \tr ((X-A)^*WY)\big)+2Re\big(\PI{Y^*W(Xx_0-b)}{x_0}\big).
\end{align*}
Then, $DF(A_0)(Y)=0$, for every $Y\in L(\HH, \mc{F})$. So that 
\begin{align*}
	0&=Re\big( \tr ((A_0-A)^*WY)\big)+Re\big(\PI{x_0}{Y^*W(A_0x_0-b)}\big)=\\
	&=Re\big( \tr (Y^*W(A_0-A))\big)+Re\big(\tr(\PI{\cdot}{x_0}Y^*W(A_0x_0-b))\big)=\\
	&=Re\big( \tr (Y^*W(A_0-A))\big)+Re\big(\tr(Y^*\PI{\cdot}{x_0}W(A_0x_0-b))\big)=\\
	&=Re\big( \tr (Y^*[W(A_0-A)+\PI{\cdot}{x_0}W(A_0x_0-b)]\big).
\end{align*}
Thus 
	$$
	W(A_0-A)+\langle \cdot,x_0 \rangle W(A_0x_0-b)=0.
	$$
\end{proof}

\begin{corollary} \label{cor1}Let $A\in L(\mathcal H, \mc{F})$, $T\in L(\HH,\mc{E}),$ $b \in \mathcal F$ and $W \in L(\mathcal F)^+$ such that $W^{1/2} \in S_2.$ Suppose that $(A_0, x_0)$ is a solution of problem \eqref{RTLS}. Then 
	$$
	(1+\|x_0\|^2)T^*Tx_0+A^*W(Ax_0-b) = \frac{\|Ax_0-b\|^2_{W}}{1+\|x_0\|^2}x_0.
	$$ 
	Moreover,
	$$
	WA_0=WA-
	\langle \cdot,x_0 \rangle W \frac{Ax_0-b}{1+\|x_0\|^2}.
	$$
\end{corollary}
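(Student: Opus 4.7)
The plan is to derive both assertions by purely algebraic manipulation of the three conclusions of Proposition~\ref{CPO RTLS}, namely the normal equation $T^{*}Tx_{0}+A_{0}^{*}W(A_{0}x_{0}-b)=0$, the rank-one perturbation formula $WA_{0}=WA-\langle\cdot,x_{0}\rangle W(A_{0}x_{0}-b)$, and the orthogonality relation $A_{0}^{*}W(A_{0}-A)=0$. The key insight is that the rank-one correction can be evaluated at $x_{0}$ to make the residual $W(A_{0}x_{0}-b)$ explicit in terms of the original residual $W(Ax_{0}-b)$.

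First I would evaluate the rank-one perturbation formula at $x_{0}$, obtaining $WA_{0}x_{0}=WAx_{0}-\|x_{0}\|^{2}W(A_{0}x_{0}-b)$. Subtracting $Wb$ from both sides and rearranging yields
\[
(1+\|x_{0}\|^{2})\,W(A_{0}x_{0}-b)=W(Ax_{0}-b),
\]
so that $W(A_{0}x_{0}-b)=\frac{W(Ax_{0}-b)}{1+\|x_{0}\|^{2}}$. Substituting this back into the rank-one formula from Proposition~\ref{CPO RTLS} gives the second conclusion of the corollary,
\[
WA_{0}=WA-\langle\cdot,x_{0}\rangle\,W\frac{Ax_{0}-b}{1+\|x_{0}\|^{2}}.
\]

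For the first conclusion, I would take adjoints in the identity just obtained. Since $(\langle\cdot,x_{0}\rangle z)^{*}=\langle\cdot,z\rangle x_{0}$ and $W^{*}=W$, this yields $A_{0}^{*}Wf=A^{*}Wf-\langle f,z\rangle x_{0}$ for all $f\in\mc F$, where $z=\frac{W(Ax_{0}-b)}{1+\|x_{0}\|^{2}}$. Applying this with $f=Ax_{0}-b$ and using that $\langle Ax_{0}-b,W(Ax_{0}-b)\rangle=\|Ax_{0}-b\|_{W}^{2}$ produces
\[
A_{0}^{*}W(Ax_{0}-b)=A^{*}W(Ax_{0}-b)-\frac{\|Ax_{0}-b\|_{W}^{2}}{1+\|x_{0}\|^{2}}\,x_{0}.
\]
Combining this with the identity $A_{0}^{*}W(A_{0}x_{0}-b)=\frac{1}{1+\|x_{0}\|^{2}}A_{0}^{*}W(Ax_{0}-b)$ (which comes from Step 1) and inserting the result into the normal equation $T^{*}Tx_{0}+A_{0}^{*}W(A_{0}x_{0}-b)=0$, multiplication by $(1+\|x_{0}\|^{2})$ gives precisely
\[
(1+\|x_{0}\|^{2})T^{*}Tx_{0}+A^{*}W(Ax_{0}-b)=\frac{\|Ax_{0}-b\|_{W}^{2}}{1+\|x_{0}\|^{2}}\,x_{0}.
\]

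The computation is routine once the strategy is fixed; the only subtlety is keeping track of the adjoint of a rank-one operator of the form $\langle\cdot,x_{0}\rangle z$ and ensuring the correct inner product appears so that the weighted norm $\|Ax_{0}-b\|_{W}^{2}$, rather than $\|W(Ax_{0}-b)\|^{2}$, emerges. The orthogonality relation $A_{0}^{*}W(A_{0}-A)=0$ from Proposition~\ref{CPO RTLS} is implicitly used when rewriting $A_{0}^{*}W(A_{0}x_{0}-b)$ as $A_{0}^{*}W(Ax_{0}-b)/(1+\|x_{0}\|^{2})$, but one can alternatively obtain the same reduction by direct substitution without invoking that identity.
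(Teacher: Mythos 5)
Your proof is correct and follows essentially the same route as the paper: both evaluate the rank-one perturbation formula at $x_0$ to get $(1+\|x_0\|^2)W(A_0x_0-b)=W(Ax_0-b)$, then combine the adjoint of the rank-one identity with the normal equation $T^*Tx_0+A_0^*W(A_0x_0-b)=0$; the only (cosmetic) difference is that you test the adjoint against $Ax_0-b$ using the updated formula, while the paper tests against $A_0x_0-b$ using the original one. (Minor remark: the reduction $A_0^*W(A_0x_0-b)=\tfrac{1}{1+\|x_0\|^2}A_0^*W(Ax_0-b)$ needs only applying $A_0^*$ to the displayed identity, so the relation $A_0^*W(A_0-A)=0$ is not used even implicitly, as you yourself suspect.)
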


\begin{proof}	
	Note that by \eqref{CPO RTLS v2}, $W(A_0x_0-b)=WAx_0-\|x_0\|^2W(A_0x_0-b)-Wb$ and thus, 
	\begin{equation} \label{eq5}
	(1+\|x_0\|^2)W(A_0x_0-b)=W(Ax_0-b).
	\end{equation} Therefore, 
\begin{align*}
\|Ax_0-b\|^2_{W}&=\PI{W(Ax_0-b)}{Ax_0-b}=(1+\|x_0\|^2)\PI{W(A_0x_0-b)}{Ax_0-b}\\
&=(1+\|x_0\|^2)^2\PI{A_0x_0-b}{W(A_0x_0-b)}=(1+\|x_0\|^2)^2\|A_0x_0-b\|^2_{W},
\end{align*}
and consequently, 
	\begin{align*}
	A^*W(Ax_0-b) &= A^*W(A_0x_0-b)(1+\|x_0\|^2) \\
	&=\big(A_0^*W+\langle \cdot,W(A_0x_0-b) \rangle x_0\big)(A_0x_0-b)(1+\|x_0\|^2) \\
	&=
	\big(A_0^*W(A_0x_0-b)+\|A_0x_0-b\|^2_{W}x_0\big)(1+\|x_0\|^2) \\
	&=-(1+\|x_0\|^2)T^*Tx_0+\frac{\|Ax_0-b\|^2_{W}}{1+\|x_0\|^2}x_0, 
	\end{align*}
	where we used \eqref{CPO RTLS v1} for the last equality. Finally, by \eqref{CPO RTLS v2} and \eqref{eq5}, it follows that $
	WA_0=WA-
	\langle \cdot,x_0 \rangle W \frac{Ax_0-b}{1+\|x_0\|^2}.
	$
\end{proof}

Inspired by the results in \cite{beck2006solution,golub1999tikhonov} for finite dimensional spaces, we prove that the \eqref{RTLS} problem has a solution $(A_0,x_0)$ if and only if, $x_0$ minimizes some one-variable function.

\begin{theorem}\label{RTLS simpl} Let $A\in L(\mathcal H, \mc{F})$, $T\in L(\HH,\mc{E}),$ $b \in \mathcal F$ and $W \in L(\mathcal F)^+$ such that $W^{1/2} \in S_2.$ Let $x \in \HH$ and consider $F_x: L(\HH,\mathcal F) \rightarrow \mathbb{R},$
$$
F_x(X)=\|Tx\|^2+\|A-X\|_{2,W}^2+\|Xx-b \|_W^2.
$$
Then,  for every $x\in\HH,$ there exists $A_x \in L(\HH,\mathcal F)$ which minimizes $F_x.$

Moreover $$F_x(A_x)=\frac{\|Ax-b\|_W^2}{1+\|x\|^2}+\|Tx\|^2=:G(x).$$

\end{theorem}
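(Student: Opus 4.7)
The plan is to construct $A_x$ explicitly, using the rank-one perturbation suggested by Corollary~\ref{cor1}, and then to verify minimality by a direct quadratic expansion. For fixed $x\in\HH$, I would set
$$A_x := A - \frac{\PI{\cdot}{x}(Ax-b)}{1+\|x\|^2},$$
which is in $L(\HH,\mc F)$ as a bounded rank-one perturbation of $A$.

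First I would compute $F_x(A_x)$. A short calculation gives $A_x x - b = \frac{Ax-b}{1+\|x\|^2}$, so $\|A_x x - b\|_W^2 = \frac{\|Ax-b\|_W^2}{(1+\|x\|^2)^2}$. Since $W^{1/2}(A-A_x) = \frac{\PI{\cdot}{x}W^{1/2}(Ax-b)}{1+\|x\|^2}$ is a rank-one operator, and $\|\PI{\cdot}{u}v\|_2 = \|u\|\|v\|$ (property (2) of a normed ideal, applied to $S_2$), one gets $\|A-A_x\|_{2,W}^2 = \frac{\|x\|^2\|Ax-b\|_W^2}{(1+\|x\|^2)^2}$. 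Summing these with $\|Tx\|^2$ yields $F_x(A_x)=\|Tx\|^2+\frac{\|Ax-b\|_W^2}{1+\|x\|^2}=G(x)$.

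To prove minimality, I would substitute $X=A_x+Y$ with arbitrary $Y\in L(\HH,\mc F)$ and expand the two squared norms in $F_x(X)$. The purely quadratic-in-$Y$ part is $\|Y\|_{2,W}^2+\|Yx\|_W^2\ge0$. The linear-in-$Y$ cross terms are
$$2\ Re\ \tr((A_x-A)^*WY)+2\ Re\ \PI{W(A_x x - b)}{Yx}.$$
Using $(A_x-A)^*=-\frac{\PI{\cdot}{Ax-b}x}{1+\|x\|^2}$, the identity $\tr(\PI{\cdot}{u}v)=\PI{u}{v}$ recalled in Section~\ref{Preliminaries}, and $W(A_x x-b)=\frac{W(Ax-b)}{1+\|x\|^2}$, both cross terms reduce to $\pm\frac{2\ Re\ \PI{W(Ax-b)}{Yx}}{1+\|x\|^2}$ and cancel. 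Therefore $F_x(A_x+Y)-F_x(A_x)=\|Y\|_{2,W}^2+\|Yx\|_W^2\ge 0$, so $A_x$ is a global minimizer.

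The main technical step is the cancellation of the cross terms, which is exactly the critical-point identity $W(A_x-A)+\PI{\cdot}{x}W(A_x x - b)=0$ from Proposition~\ref{CPO RTLS}, now satisfied by construction of $A_x$. Alternatively, since $F_x$ is convex in $X$ as a sum of quadratic forms, one may appeal to Proposition~\ref{TeoD} to check that the Fr\'echet derivative of $F_x$ vanishes at $A_x$, which again amounts to the same identity.
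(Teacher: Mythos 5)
Your proof is correct and follows essentially the same route as the paper: you exhibit the same rank-one perturbation $A_x=A+\frac{\PI{\cdot}{x}}{1+\|x\|^2}(b-Ax)$, compute $F_x(A_x)=G(x)$ from $A_xx-b=\frac{Ax-b}{1+\|x\|^2}$ and $\|\PI{\cdot}{u}v\|_2=\|u\|\,\|v\|$, and establish global minimality by the quadratic expansion $F_x(A_x+Y)-F_x(A_x)=\|Y\|_{2,W}^2+\|Yx\|_W^2$, whose cross-term cancellation is exactly the first-order identity $W(A_x-A)+\PI{\cdot}{x}W(A_xx-b)=0$ used in the paper. The only cosmetic difference is that the paper derives that identity first via the Fr\'echet derivative and then checks $A_x$ satisfies it, while you verify the cancellation directly.
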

\begin{proof}	
Fixed $x\in\HH$, proceeding as in the proof of Proposition \ref{CPO RTLS}, if $DF_x(X)(Y)=0$ for every $Y.$ Then 
\begin{equation}\label{CPO Fx}
W(X-A)+\langle \cdot,x \rangle W(Xx-b)=0.
\end{equation}

We claim that $A_x$ verifies the first order conditions \eqref{CPO Fx} if and only if $F_x$ has a minimum in $A_x.$ In fact, suppose that $A_x$ satisfies \eqref{CPO Fx}.
 Then \begin{align*}
F_x(X)&=\Vert Tx \Vert + \|A-A_x\|_{2,W}^2+\|A_x-X \|_{2,W}^2+2 Re \big(  \tr[(A_x-X)^*W(A-A_x)]\big)+\|A_xx-b\|_{W}^2 \!\!\! \\
 &+\|Xx-A_xx \|_W^2+2 Re \big( \langle W(A_xx-b), Xx-A_x x \rangle \big)\\
&=F_x(A_x )+\|A_x-X \|_{2,W}^2+\|Xx-A_x x \|_W^2,
\end{align*}
where the second equality follows because 
\begin{align*}
\tr[(A_x-X)^*W(A-A_x)]&=\tr[(A_x-X)^*\langle \cdot,x \rangle W(A_xx-b)]\\
&=\langle W(A_xx-b), (A_x-X)x\rangle=-\langle W(A_xx-b), Xx-A_x x\rangle.
\end{align*}
Therefore, 
$F_x(X)\geq F_x(A_x)$ for every $X\in L(\HH, \mc{F})$, so that $A_x$ is a minimum of $F_x$. 
The converse follows from the fact that $F_x \in \mc{C}^1(L(\HH,\mc{F}),\mathbb{R}).$

Moreover, it is not difficult to see that  $A_x=A+\frac{\langle \cdot,x \rangle}{1+ \|x\|^2}(b-Ax)$ satisfies \eqref{CPO Fx}. Therefore, $F_x$ has a minimum.

If $A_x$ is a minimum of $F_x$ then, $A_x$ is a solution of equation \eqref{CPO Fx} and proceeding as in Corollary \ref{cor1}, it holds that 
$(1+\|x\|^2)^2\|A_xx-b\|^2_{W}=\|Ax-b\|^2_{W}$ and  $\|A-A_x\|_{2,W}^2=\|x\|^2\|A_x
x-b\|^2_{W}.$  Consequently, the minimum of $F_x$ is 
	\begin{align*}
	F_x(A_x) &=\underset{X\in L(\HH, \mc{F})}{\min }{F_x(X)}=  \|Tx\|^2+\frac{\|Ax-b\|_W^2}{(1+\|x\|^2)^2}+\|x\|^2\frac{\|Ax-b\|_W^2}{(1+\|x\|^2)^2}\\ &=\|Tx\|^2+\frac{\|Ax-b\|_W^2}{1+\|x\|^2}=:G(x).
	\end{align*}
\end{proof}

\begin{corollary} \label{cor RTLS simpl}  Let $A\in L(\mathcal H, \mc{F})$, $T\in L(\HH,\mc{E}),$ $b \in \mathcal F$ and $W \in L(\mathcal F)^+$ such that $W^{1/2} \in S_2.$ 
Then, there exists $A_0$ such that $(A_0,x_0)$ is a solution of problem \eqref{RTLS} if and only if $x_0$ is a minimum of $G.$ 

In this case, $A_0=A+\frac{\langle \cdot,x_0 \rangle}{1+ \|x_0\|^2}(b-Ax_0).$ 
\end{corollary}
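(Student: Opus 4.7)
The plan is to derive the corollary as a direct consequence of Theorem \ref{RTLS simpl}, which has already done the bulk of the work by computing the partial minimization in the variable $X$ for each fixed $x$. Concretely, let
$$\Phi(X,x):=\|Tx\|^2+\|A-X\|_{2,W}^2+\|Xx-b\|_W^2,$$
so $\Phi(\cdot,x)=F_x(\cdot)$ and the \eqref{RTLS} problem is the joint minimization of $\Phi$ over $L(\HH,\mc F)\times\HH$. Theorem \ref{RTLS simpl} supplies, for every $x\in\HH,$ a concrete minimizer $A_x=A+\tfrac{\langle\cdot,x\rangle}{1+\|x\|^2}(b-Ax)$ of $F_x$, together with the identity $\min_{X}F_x(X)=G(x).$ The strategy is just to interchange the two infima.

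For the \emph{sufficiency} direction, assume $x_0$ is a minimizer of $G$ and set $A_0:=A+\tfrac{\langle\cdot,x_0\rangle}{1+\|x_0\|^2}(b-Ax_0)$. By Theorem \ref{RTLS simpl}, $F_{x_0}(A_0)=G(x_0)$. For any admissible pair $(X,x)$,
$$\Phi(X,x)=F_x(X)\ge\min_{Y\in L(\HH,\mc F)}F_x(Y)=G(x)\ge G(x_0)=F_{x_0}(A_0)=\Phi(A_0,x_0),$$
so $(A_0,x_0)$ solves \eqref{RTLS}.

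For the \emph{necessity} direction, suppose $(A_0,x_0)$ is a solution of \eqref{RTLS}. For arbitrary $x\in\HH$, apply Theorem \ref{RTLS simpl} with this $x$: there is some $A_x$ with $F_x(A_x)=G(x)$, hence
$$\Phi(A_0,x_0)\le\Phi(A_x,x)=G(x).$$
On the other hand $\Phi(A_0,x_0)=F_{x_0}(A_0)\ge\min_X F_{x_0}(X)=G(x_0),$ so combining the two inequalities at $x=x_0$ gives $G(x_0)\le G(x)$ for every $x$, i.e.\ $x_0$ minimizes $G$. Moreover the same chain forces $F_{x_0}(A_0)=G(x_0)$, so $A_0$ is a minimizer of $F_{x_0}$; by the identity in Theorem \ref{RTLS simpl} (the expansion $F_x(X)=F_x(A_x)+\|A_x-X\|_{2,W}^2+\|Xx-A_xx\|_W^2$ proved there), any such minimizer agrees with the canonical choice $A+\tfrac{\langle\cdot,x_0\rangle}{1+\|x_0\|^2}(b-Ax_0)$ up to a term annihilated by $W^{1/2}$, so this formula may always be used for $A_0$.

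There is no real obstacle here once Theorem \ref{RTLS simpl} is in hand; the only point requiring a little care is the assertion that the $A_0$ in the statement can be taken to be the explicit rank-one update, since \emph{a priori} the minimizer of $F_{x_0}$ need not be unique when $W$ has a nontrivial kernel. This is resolved exactly by the quadratic expansion from the proof of Theorem \ref{RTLS simpl}, which shows that the explicit $A_{x_0}$ is always a valid choice.
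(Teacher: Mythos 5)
Your proof is correct and follows essentially the same route as the paper: both directions reduce to Theorem \ref{RTLS simpl} by interchanging the two minimizations, using $\min_X F_x(X)=G(x)$ and the explicit minimizer $A_x$. Your closing remark that a minimizer of $F_{x_0}$ is only determined up to a perturbation annihilated by $W^{1/2}$ (so the canonical rank-one formula is always a valid choice of $A_0$) is a point the paper passes over silently, and it is handled correctly via the quadratic expansion from the theorem.
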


\begin{proof} If $(A_0,x_0)$ is a solution of problem \eqref{RTLS} then, in one hand $F_{x_0}(A_0)\leq F_x(X),$ for every $x\in \HH$, $X\in L(\HH, \mc{F})$ and, by Theorem \ref{RTLS simpl}, $F_{x_0}$ has a minimum in $A_{x_0},$ so that $$
G(x_0)=F_{x_0}(A_{x_0}) =F_{x_0}(A_0)\leq F_x(X),
$$
for every $x\in \HH$, $X\in L(\HH, \mc{F})$. By Theorem \ref{RTLS simpl}, $F_x$ has a minimum for every $x \in \HH,$ then $$G(x_0)\leq \underset{X\in L(\HH, \mc{F})}{\min }{F_x(X)}=G(x)$$ for every $x\in \HH$. 
	
Conversely, if $x_0$ is a minimum of $G(x)$ then, by Theorem \ref{RTLS simpl},
	$$
	 \underset{X\in L(\HH, \mc{F})}{\min }{F_{x_0}(X)}=G(x_0)\leq G(x)= \underset{X\in L(\HH, \mc{F})}{\min }{F_x(X)}\leq F_x(X),
	$$
		for every $x\in \HH$, $X\in L(\HH, \mc{F})$. As proved in Theorem \ref{RTLS simpl}, $A_0:=A+\frac{\langle \cdot,x_0 \rangle}{1+ \|x_0\|^2}(b-Ax_0)$ is a minimum of $F_{x_0}$ and, 
		$$
		F_{x_0}(A_0)=\underset{X\in L(\HH, \mc{F})}{\min }{F_{x_0}(X)}=G(x_0)\leq  F_x(X),
		$$
		for every $x\in \HH$, $X\in L(\HH, \mc{F})$. Therefore, $(A_0,x_0)$ is a solution problem \eqref{RTLS}.
\end{proof}

\begin{remark}
	In finite dimensional Hilbert spaces, if $T$ is invertible it is known that $G$ has always a minimum because $G$ is coercive, therefore \eqref{RTLS} has a solution. See \cite[Section 3]{beck2006solution}.
\end{remark}

Until now we have seen conditions that a solution of the \eqref{RTLS} must satisfy, but the only cases in the infinite dimensional setting we presented do not have solution (Proposition \ref{RTLS no tiene sol}).  
We now give an example of a diagonal operator on an infinite dimensional space.

\begin{example}\label{ejemplo diagonal}
	Let $\mathcal H=\mathcal F=\mathcal E=\ell_2$, the real Hilbert space of square summable sequences. Let $A$ be the diagonal operator $Ax=(a_nx_n)_n$, $b=\sum_{j=1}^Nb_je_j$ a finite sequence, where $(e_j)_j$ is the cononical basis and $W$ a diagonal weight operator, with weights $(w_n)_n$ in the diagonal. Let us see that  problem \eqref{RTLS}  has a solution for $T=\rho I$, for every $\rho>0$.
	
	Observe that, for any $\alpha=\sum_{j=1}^N\alpha_je_j$ and any $s=\sum_{j>N}s_je_j$,
	$$
	G(\sum_{j=1}^N\alpha_je_j+\sum_{j>N}s_je_j)=\frac{\sum_{j=1}^Nw_j(a_j\alpha_j-b_j)^2+\sum_{j>N}w_ja_j^2s_j^2}{1+\|\alpha\|^2+\|s\|^2}+\rho^2(\|\alpha\|^2+\|s\|^2).
	$$

	
	Thus, for any $\alpha,s$, 
	$$
	G(\sum_{j=1}^N\alpha_je_j+\sum_{j>N}s_je_j)\ge\frac{\sum_{j=1}^Nw_j(a_j\alpha_j-b_j)^2}{1+\|\alpha\|^2+\|s\|^2}+\rho^2(\|\alpha\|^2+\|s\|^2):=h(\alpha,\|s\|).
	$$
	Identifying the span of the $N$ first canonical vectors with $\mathbb R^N$, the function $h$ may be seen as a function from $\mathbb R^{N+1}$ to $\mathbb R.$
	Note that it suffices to prove that $h$ has a minimum that is attained at a point of the form $(\alpha^*,\|s^*\|)=(\hat\alpha,0)$. Indeed, if we prove it then for any $\alpha,s$ the following holds,
	$$
	G(\sum_{j=1}^N\alpha_je_j+\sum_{j>N}s_je_j)\ge h(\alpha,\|s\|)\ge h(\hat\alpha,0)=\frac{\sum_{j=1}^Nw_j(a_j\hat\alpha_j-b_j)^2}{1+\|\hat\alpha\|^2}+\rho^2(\|\hat\alpha\|^2)=G(\hat\alpha).
	$$ 
	In other words, $\hat\alpha$ would be a global minimum of $G$.

	Note also that $h$ is a coercive everywhere differentiable function of $N+1$ variables. Then, its minimum must be attained at a critical point. 
	Thus, it is sufficient to show that for any critical point $(\alpha,\|s\|)$, we have $h(\alpha,\|s\|)\ge\min_{\hat\alpha}h(\hat\alpha,0)$.

	 Since,
	$$
	\frac{\partial h}{\partial \|s\|}=-\frac{(\sum_{j=1}^Nw_j(a_j\alpha_j-b_j)^2)2\|s\|}{(1+\|\alpha\|^2+\|s\|^2)^2}+2\rho^2\|s\|,
	$$
	$\frac{\partial h}{\partial \|s\|}=0$ implies that either $s=0$   or
	\begin{align}\label{dh/ds=0}
	\rho^2=\frac{(\sum_{j=1}^Nw_j(a_j\alpha_j-b_j)^2)}{(1+\|\alpha\|^2+\|s\|^2)^2}.
	\end{align}  
	If $s=0$, we are done because the critical point is of the form $(\hat\alpha,0)$. 
	
	For the other case, since 
	$$
	\frac{\partial h}{\partial \alpha_j}=\frac{2w_ja_j(a_j\alpha_j-b_j)(1+\|\alpha\|^2+\|s\|^2)-2\alpha_j(\sum_{j=1}^Nw_j(a_j\alpha_j-b_j)^2)}{(1+\|\alpha\|^2+\|s\|^2)^2}+2\rho^2\alpha_j,
	$$
	we have that $\frac{\partial h}{\partial \alpha_j}=0$ and equation \eqref{dh/ds=0} imply that $\displaystyle \frac{w_ja_j(a_j\alpha_j-b_j)}{1+\|\alpha\|^2+\|s\|^2}=0$.
	
	Thus, if $\{1,...,N\}= \mathcal C\cup \mathcal D$ with $w_ia_i=0$ for $i\in \mathcal C $ and $w_ia_i\neq0$ for $i\in  \mathcal D$, then $\alpha_j=\frac{b_j}{a_j}$ for every $j \in  \mathcal D$. 
	
	 Suppose first that $w_ja_j\ne 0$ for every $j\le N$, i.e. $ \mathcal C=\emptyset$. Then $\alpha_j=\frac{b_j}{a_j}$ for every $j\le N$. Thus, replacing this  again in \eqref{dh/ds=0}, we obtain $\rho=0$, which is a contradiction (and thus only the case $s=0$ is possible).  
	
	
	If $ \mathcal C\ne\emptyset$, say $k\in \mathcal C$, let $(\alpha^*,\|s^*\|)$ be a critical point. Thus $\alpha^*=\sum_{j\in\mathcal C}\alpha_j^*e_j+\sum_{j\in\mathcal D}\frac{b_j}{a_j}e_j.$ and let 
	$$
	\hat\alpha:=(\|s^*\|^2+\sum_{j\in\mathcal C}(\alpha_j^*)^2)^{1/2}e_k+\sum_{j\in\mathcal D}\frac{b_j}{a_j}e_j.
	$$
	Then, since $\|(\hat\alpha,0)\|_{\mathbb R^{N+1}}^2=
	\|(\alpha^*,\|s^*\|)\|_{\mathbb R^{N+1}}^2$,
	it easy to check that $h(\hat\alpha,0)=
	h(\alpha^*,\|s^*\|)$, which is what we wanted to prove.
	
\end{example}

\subsection{The case $T$ is a multiple of the identity} \label{section case-I}

In this subsection, we find some sufficient conditions for the existence of solution when the regularization operator $T$ is a multiple of the identity.
Let $A\in L(\mathcal H, \mc{F})$, $T\in L(\HH,\mc{E}),$ $b \in \mathcal F$ and $W \in L(\mathcal F)^+$ such that $W^{1/2} \in S_2,$  by Corollary \ref{cor RTLS simpl}, to solve the \eqref{RTLS} problem, is equivalent to minimizing the function $G$. We suppose in this section that $T=\rho^{1/2} I$ (a multiple of the identity), so that, the problem is to minimize
$$
G(x)=\frac{\|Ax-b\|_W^2}{1+\|x\|^2}+\rho \|x\|^2,
$$
for a given constant $\rho>0$. 
We will apply to $G$ the Dinkelbach method, see \cite[Section 5.2]{beck2006solution} and \cite{dinkelbach1967nonlinear}.

Let us call $t^*\geq0$ to the infimum of $G(x)$, varying $x\in \HH$. 
Then the infimum of the expression 
\begin{align}\label{Dinkelbach 1}
\|Ax-b\|_W^2 +\rho\|x\|^4+(\rho-t^*)\|x\|^2-t^*,
\end{align}
is 0. Moreover, $x_0$ minimizes $G$ if and only if $x_0$ minimizes 
\eqref{Dinkelbach 1}
and in this case, the minimum of the expression in \eqref{Dinkelbach 1} equals $0$.

Let us define 
$$
\phi(t):=\inf_x \{ \|Ax-b\|_W^2 +\rho\|x\|^4+(\rho-t)\|x\|^2-t\}.
$$
Then $\phi$ is a decreasing function and thus it has at most one zero. Moreover, since $\phi(t^*)=0$ by definition, $t^*$ is the only root of $\phi$.

\begin{corollary}\label{rho>b}
	If  $\rho\ge t^*$ then  problem \eqref{RTLS} with $T=\rho^{1/2}I$ has a unique solution.
\end{corollary}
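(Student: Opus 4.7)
The plan is to exploit the Dinkelbach reformulation already set up in the excerpt: minimizing $G$ is equivalent to minimizing
$$
\Phi(x) := \|Ax-b\|_W^2 + \rho\|x\|^4 + (\rho - t^*)\|x\|^2 - t^*,
$$
and we already know that $\phi(t^*) = \inf_x \Phi(x) = 0$. So the corollary reduces to proving that, when $\rho \geq t^*$, the infimum of $\Phi$ is attained at exactly one point. By Corollary \ref{cor RTLS simpl}, such a unique $x_0$ yields the unique solution $(A_0,x_0)$ of \eqref{RTLS}, with $A_0 = A + \frac{\langle\cdot,x_0\rangle}{1+\|x_0\|^2}(b-Ax_0)$.

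For existence, the hypothesis $\rho \geq t^*$ makes the coefficient $(\rho-t^*)\geq 0$, so $\Phi(x) \geq \rho\|x\|^4 - t^* \to +\infty$ as $\|x\|\to\infty$, i.e., $\Phi$ is coercive. Given a minimizing sequence $(x_n)$, it is norm-bounded, hence we may extract a weakly convergent subsequence $x_n \rightharpoonup x_0$. The key point is that $W^{1/2}\in S_2$ implies $W^{1/2}$, and therefore $W^{1/2}A$, is compact; so $W^{1/2}Ax_n \to W^{1/2}Ax_0$ in norm, yielding $\|Ax_n-b\|_W^2 \to \|Ax_0-b\|_W^2$. Since $\|x\|^2$ and $\|x\|^4$ are convex and continuous, they are weakly lower semicontinuous, and so is $\Phi$. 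Thus $\Phi(x_0) \leq \liminf_n \Phi(x_n) = 0 = \inf\Phi$, and $x_0$ is a minimizer.

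For uniqueness, note that $\|tx+(1-t)y\|^2 = t\|x\|^2+(1-t)\|y\|^2 - t(1-t)\|x-y\|^2$ shows that $x\mapsto \|x\|^2$ is strictly convex, and squaring (and using $u\mapsto u^2$ increasing and strictly convex on $[0,\infty)$) shows $x\mapsto \|x\|^4$ is strictly convex too. Therefore, since $\rho>0$ and $(\rho-t^*)\geq 0$, the function $\Phi$ is strictly convex (as $\|Ax-b\|_W^2$ is convex, $(\rho-t^*)\|x\|^2$ is convex, and $\rho\|x\|^4$ is strictly convex). A strictly convex function has at most one minimizer, so $x_0$ is unique.

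The main obstacle is the weak lower semicontinuity in the existence step. In infinite dimensions the functional $x\mapsto \|Ax-b\|_W^2$ is not automatically continuous under weak convergence, and the standard weakly lsc arguments fail for the fractional part of $G$ itself (the denominator $1+\|x\|^2$ can jump up in the weak limit). The Dinkelbach substitution is precisely what converts the problematic ratio into a sum whose only non-convex-continuous term, $\|Ax-b\|_W^2$, is made weakly continuous by the compactness provided by $W^{1/2}\in S_2$.
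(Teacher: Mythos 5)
Your proof is correct and follows essentially the same route as the paper: both reduce to the Dinkelbach expression, use $\rho\ge t^*$ to get coercivity plus strict convexity (via $\rho\|x\|^4$), and then invoke Corollary \ref{cor RTLS simpl} to pass from the unique minimizer of $G$ to the solution of \eqref{RTLS}. The only difference is that you spell out why the minimum is attained in infinite dimensions (weak compactness of bounded sets and weak lower semicontinuity, using compactness of $W^{1/2}A$ for the data-fit term), a step the paper leaves implicit in the phrase ``strictly convex coercive function \dots therefore it has a unique minimizer.''
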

\begin{proof}
	By the above comments, the infimum in  \eqref{Dinkelbach 1} is 0. Moreover, if we have that $\rho\ge t^*$ then \eqref{Dinkelbach 1} is a strictly convex coercive function of $x$ and therefore it has a unique minimizer $x_0$. By the above comments, $x_0$ is the unique minimizer of $G$ and, by Corollary \ref{cor RTLS simpl}, $x_0$ must be the unique solution of problem \eqref{RTLS}.
\end{proof}

\begin{remark} Suppose  $\rho\ge\|b\|_W^2$ then problem \eqref{RTLS} with $T=\rho^{1/2}I$ has a unique solution. 
	 	Indeed,  note that since $G(0)=\|b\|_W^2$, we have that $t^*$ is always less than or equal to   $\|b\|_W^2$.
\end{remark}
\bigskip

\subsection*{How to find $t^*$}

In this subsection we give a characterization of $t^*$ and as a corollary we present more  sufficient conditions for the existence of solutions of problem \eqref{RTLS}. In this subsection, we suppose that $\HH$ is a real Hilbert space.

The following lemma, which is a partial extension of \cite[Theorem 1]{nguyen2019solving} to infinite dimensional spaces will be a crucial tool for the results in this subsection. See \cite{continoPolyak}.

\begin{lemma}\label{new S-lemma}
Let $f(x)=\langle Sx,x\rangle+\langle x,a\rangle+s$, with $S\in L(\HH)$ nonnegative, $a\in\HH$, $s\in\mathbb R$, let  $g(x)=\|x\|^2$ and let
$F:\mathbb R^2\to\mathbb R$ defined as
$$
F(z)=\langle \Theta z,z\rangle+\langle z,v\rangle -t,
$$	
where $\Theta$ is a real symmetric nonnegative $2\times 2$ matrix, $v=(v_1,v_2)\in\mathbb R^2$ and $t\in\mathbb R$. Then the following are equivalent:
\begin{itemize}
	\item[(i)]  $F(f(x),g(x))\ge 0$ for every $x\in\HH$.
	
	\item[(ii)] There exist $\alpha,\beta\in \mathbb R$ such that for every $x\in \HH$ and every $z=(z_1,z_2)\in\mathbb R^2$,
	$$
	F(z)+\alpha(f(x)-z_1)+\beta (g(x)-z_2)\ge0.
	$$
\end{itemize}
Moreover, if $S$ is not bounded below, 
 $\Theta=\left(\begin{array}{cc}
	0			 & 0  \\
	0		 &  \rho
	\end{array}\right)$ and $v_1>0$,
 then $\alpha$ and $\beta$ can be chosen nonnegative.
\end{lemma}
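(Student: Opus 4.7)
The plan is to handle the easy direction by direct substitution and the hard direction by an infinite-dimensional Lagrangian duality argument, with a Polyak-type convexity result on joint ranges of pairs of quadratic forms as the key tool. For (ii)~$\Rightarrow$~(i), I would set $z_1=f(x)$ and $z_2=g(x)$ in the inequality of (ii); the multiplier terms vanish and what remains is exactly $F(f(x),g(x))\ge 0$.

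For (i)~$\Rightarrow$~(ii), I would reinterpret (ii) as the existence of Lagrange multipliers $\alpha,\beta\in\mathbb R$ with
$$d(\alpha,\beta) := \inf_{x\in\HH,\, z\in\mathbb R^2}\bigl[F(z)+\alpha(f(x)-z_1)+\beta(g(x)-z_2)\bigr]\ge 0,$$
which is the Lagrangian dual associated to the primal problem $\inf_x F(f(x),g(x))$, whose value is $\ge 0$ by (i). The standard route to strong duality is to introduce
$$\mathcal K = \bigl\{(f(x)-z_1,\,g(x)-z_2,\,w) : x\in\HH,\ z\in\mathbb R^2,\ w\ge F(z)\bigr\}\subset\mathbb R^3,$$
verify that $(0,0,-\varepsilon)\notin\mathcal K$ for every $\varepsilon>0$ (otherwise $F(f(x),g(x))\le-\varepsilon$ would contradict (i)), and separate $\mathcal K$ from this ray by a hyperplane. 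Convexity of $F$ (which follows from $\Theta\ge 0$) handles the $w$-coordinate; the delicate point is convexity of the joint range $\{(f(x),g(x)):x\in\HH\}$. Here I would invoke the infinite-dimensional extension of Dines' theorem due to Polyak (the tool of the reference cited in the statement): after homogenizing $f$ by adjoining an auxiliary real variable, the pair $(f,g)$ becomes homogeneous quadratic on $\HH\oplus\mathbb R$, and its joint numerical range is convex. The separating functional $(\alpha,\beta,\gamma)$ must satisfy $\gamma\ge 0$ by unboundedness of $\mathcal K$ in the $w$-direction, and a perturbation argument rules out $\gamma=0$; rescaling to $\gamma=1$ then recovers (ii). The principal obstacle is making this convexity step rigorous in the inhomogeneous infinite-dimensional regime.

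For the \emph{moreover} assertion, with $\Theta=\bigl(\begin{smallmatrix}0 & 0\\0 & \rho\end{smallmatrix}\bigr)$ the $z_1$-dependence of the Lagrangian is purely linear with coefficient $v_1-\alpha$; for the infimum over $z_1\in\mathbb R$ to be finite one must have $\alpha=v_1$, which is strictly positive by hypothesis. Substituting back, the $x$-dependent contribution reads $v_1\langle Sx,x\rangle+v_1\langle x,a\rangle+v_1 s+\beta\|x\|^2$, and since $S\in L(\HH)^+$ is not bounded below there is a sequence of unit vectors $x_n$ with $\langle Sx_n,x_n\rangle\to 0$; testing the Lagrangian along $tx_n$ with $t\to\infty$ shows that boundedness below forces $\beta\ge 0$. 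Thus the multipliers can be chosen nonnegative.
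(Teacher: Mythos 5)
The paper does not actually prove this lemma: it is stated as a partial extension of \cite[Theorem 1]{nguyen2019solving} and its proof is deferred to the companion paper \cite{continoPolyak} (``Polyak's theorem on Hilbert spaces'', listed as in preparation), so there is no in-paper argument to compare yours with and your proposal must stand on its own. Your direction (ii)$\Rightarrow$(i) is correct and immediate. Your treatment of the \emph{moreover} clause is also sound: with $\Theta=\bigl(\begin{smallmatrix}0&0\\0&\rho\end{smallmatrix}\bigr)$ the Lagrangian is affine in $z_1$ with slope $v_1-\alpha$, forcing $\alpha=v_1>0$, and since $S\ge 0$ is not bounded below one has $\inf_{\|x\|=1}\langle Sx,x\rangle=0$, so boundedness below of $v_1f+\beta g$ along rays $tx_n$ forces $\beta\ge 0$; this shows every admissible pair of multipliers is nonnegative, granted that one exists.

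The gap is in (i)$\Rightarrow$(ii), and it is exactly the step you flag as ``the principal obstacle'': the convexity of the joint range $U=\{(f(x),g(x)):x\in\HH\}$, to which the convexity of your set $\mathcal K$ reduces. The rest of your separation argument is routine --- disjointness of $\mathcal K$ from the ray $\{(0,0,w):w<0\}$, $\gamma\ge 0$ from unboundedness of $\mathcal K$ in the $w$-direction, and $\gamma\ne 0$ because the projection of $\mathcal K$ onto the first two coordinates is all of $\mathbb R^2$ (so $\gamma=0$ would force the separating functional to vanish). But the convexity of $U$ is the entire mathematical content of the lemma, and the homogenization you propose does not deliver it: Dines' theorem (or a Hilbert-space extension) yields convexity of the full joint range of the homogenized pair on $\HH\oplus\mathbb R$, whereas what is needed is the slice $\tau=1$, and such slices need not be convex --- already for $f(x)=x$, $g(x)=x^2$ on $\HH=\mathbb R$ (which satisfies the hypotheses with $S=0$) the set $U$ is a parabola. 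Whatever convexity is available here must come from the special structure ($g=\|\cdot\|^2$, $S\ge 0$ and not bounded below, the extra directions in an infinite-dimensional $\HH$), none of which enters your homogenization argument; establishing it is precisely the content of the Polyak-type theorem the authors defer to \cite{continoPolyak}. As written, your proposal reduces the lemma to an unproved convexity claim rather than proving it.
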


\medskip

The following gives a characterization of the infimum $t^*.$ See \cite{nguyen2019solving} for a similar result on finite dimensional spaces.
\begin{proposition}\label{SDP} Let $A\in L(\mathcal H, \mc{F})$, $b \in \mathcal F,$ $W \in L(\mathcal F)^+$ such that $W^{1/2} \in S_2$ and $\rho>0.$ 
The infimum $t^*$ of $G$ is the maximum of all $t\in \mathbb R$ such that there exist $\alpha,\beta\in \mathbb R$ such that  $C\in L(\HH\times\mathbb R^3)$  is a nonnegative operator, where $C$ is the 
operator defined as
\begin{align*}
C:=
\left(\begin{array}{cccc}
\alpha A^*WA+\beta I 			 & 0 & 0    & -\alpha A^*Wb \\
0								 & 0 & 0    & \frac{1-\alpha}{2}\\
0								 & 0 & \rho & \frac{\rho-t-\beta}{2}\\
\langle\cdot,-\alpha A^*Wb\rangle & \frac{1-\alpha}{2} & \frac{\rho-t-\beta}{2} & \alpha\|b\|_W^2-t
\end{array}\right)	.
\end{align*}
\end{proposition}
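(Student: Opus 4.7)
The plan is to reduce the characterization of $t^*$ to the Lemma stated above, which plays the role of the $S$-lemma in the infinite-dimensional setting. From the Dinkelbach analysis preceding the statement, $t^* = \max\{t \in \mathbb{R} : \phi(t) \ge 0\}$: indeed $\phi$ is decreasing and vanishes precisely at $t^*$, so $\{t : \phi(t) \ge 0\} = (-\infty, t^*]$. Hence the proposition amounts to showing that $\phi(t) \ge 0$ if and only if there exist $\alpha, \beta \in \mathbb{R}$ making the operator $C$ defined in the statement nonnegative.

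The first step is to recast the inequality $\phi(t) \ge 0$ as $F(f(x), g(x)) \ge 0$ for every $x \in \HH$, where $f(x) := \|Ax-b\|_W^2 = \langle A^*WAx, x\rangle - 2\langle x, A^*Wb\rangle + \|b\|_W^2$, $g(x) := \|x\|^2$, and
$$F(z_1, z_2) := z_1 + \rho z_2^2 + (\rho-t)z_2 - t = \langle \Theta z, z\rangle + \langle z, v\rangle - t,$$
with $\Theta = \left(\begin{array}{cc} 0 & 0 \\ 0 & \rho \end{array}\right)$ and $v = (1, \rho - t)$. These data satisfy the hypotheses of Lemma \ref{new S-lemma}: $\Theta$ is symmetric and nonnegative, $v_1 = 1 > 0$, and $S = A^*WA$ is compact (since $W^{1/2}A$ is compact), hence not bounded below on the infinite-dimensional space $\HH$. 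The Lemma then delivers the equivalence of $F(f(x), g(x)) \ge 0$ for all $x \in \HH$ with the existence of $\alpha, \beta \in \mathbb{R}$ such that
$$F(z) + \alpha(f(x) - z_1) + \beta(g(x) - z_2) \ge 0 \qquad \text{for every } x \in \HH \text{ and } z \in \mathbb{R}^2.$$

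The final step is a bookkeeping verification that this Lagrangian-type condition is equivalent to $C \ge 0$. Expanding $\langle Cv, v\rangle$ for $v = (x, u_1, u_2, u_3) \in \HH \times \mathbb{R}^3$ yields
$$\langle Cv, v\rangle = \alpha \langle A^*WAx, x\rangle + \beta \|x\|^2 - 2\alpha u_3 \langle x, A^*Wb\rangle + (1-\alpha) u_1 u_3 + \rho u_2^2 + (\rho - t - \beta) u_2 u_3 + (\alpha \|b\|_W^2 - t) u_3^2,$$
and specializing $(u_1, u_2, u_3) = (z_1, z_2, 1)$ reproduces precisely the Lagrangian expression. Hence $C \ge 0$ trivially implies the Lagrangian inequality; conversely, the Lagrangian inequality gives nonnegativity of $\langle Cv, v\rangle$ on the slice $u_3 = 1$, and by quadratic homogeneity ($\langle C(x, u_1, u_2, u_3), (x, u_1, u_2, u_3)\rangle = u_3^2 \langle C(x/u_3, u_1/u_3, u_2/u_3, 1), \cdot\rangle$) this extends to all $u_3 \ne 0$. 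When $u_3 = 0$ the form collapses to $\alpha \langle A^*WAx, x\rangle + \beta \|x\|^2 + \rho u_2^2$, whose nonnegativity is obtained from the Lagrangian inequality by replacing $x$ with $tx$ and letting $t \to \infty$, which isolates the quadratic-in-$t$ term $\alpha \langle A^*WAx, x\rangle + \beta \|x\|^2 \ge 0$; since $\rho > 0$ the remaining $\rho u_2^2$ is automatic.

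I expect the principal subtlety to be the deployment of Lemma \ref{new S-lemma}, which acts as an infinite-dimensional surrogate for the classical $S$-lemma; its hypotheses on $\Theta$, on the positivity of $v_1$, and on the fact that $S$ is not bounded below must be checked in our setting, but once those are in place the remaining steps are algebraic. Attainment of the maximum then follows from $\phi(t^*) = 0$, which places $t = t^*$ in the feasible set for the positivity condition on $C$, and combining everything yields the proposition.
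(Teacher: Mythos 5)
Your proof is correct and follows essentially the same route as the paper: both reduce the characterization of $t^*$ to Lemma \ref{new S-lemma} (applied with $S=A^*WA$, $\Theta=\left(\begin{smallmatrix}0&0\\0&\rho\end{smallmatrix}\right)$, $v=(1,\rho-t)$) and then verify algebraically that the resulting Lagrangian condition is equivalent to $C\ge 0$. The only immaterial difference is on the slice $u_3=0$: the paper invokes the nonnegativity of $\alpha,\beta$ guaranteed by the lemma's ``moreover'' clause, while you derive $\alpha\langle A^*WAx,x\rangle+\beta\|x\|^2\ge 0$ directly by scaling $x$ in the Lagrangian inequality; both are valid.
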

\begin{proof}
Let us denote $f(x)=\|Ax-b\|_W^2$  and $g(x)=\|x\|^2$. Then note that
\begin{align*}
t^*=& \max_{t\in\mathbb R}\{t: f(x)+\rho g(x)^2+(\rho-t)g(x)-t\ge 0, \forall x\in\HH\} \\
=& \max_{t,\alpha,\beta\in\mathbb R}\{t: z_1+\rho z_2^2+(\rho-t)z_2-t +\alpha(f(x)-z_1)+\beta(g(x)-z_2)\ge 0,\forall x\in\HH, z_1,z_2\in\mathbb R\},\! \! 
\end{align*}
where the first equality holds by the comments at the beginning of the section and the  last equality holds from the above lemma applied to $f(x)=\|Ax-b\|_W^2=\langle A^*WAx,x\rangle-2\langle x,A^*Wb\rangle+\langle Wb,b\rangle$ and $F(z)= \rho z_2^2 +z_1+(\rho-t)z_2-t$. Note that, by the above lemma, $\alpha$ and $\beta$ can be chosen nonnegative.

Let $x\in\HH$, $z_1,z_2\in\mathbb R$, $y=(x,z_1,z_2,1)\in\HH\times\mathbb R^3$, then
\begin{align*}
\langle Cy,y\rangle &= \langle (\alpha A^*WA+\beta I)x,x\rangle-2\alpha  \langle  A^*Wb,x\rangle +(1-\alpha)z_1+\rho z_2^2+(\rho-t-\beta)z_2+\alpha\|b\|_W^2-t \\
& = z_1+\rho z_2^2+(\rho-t)z_2-t +\alpha(\|Ax-b\|_W^2-z_1)+\beta(\|x\|^2-z_2).
\end{align*}
Therefore $\langle Cy,y\rangle\ge 0$ for every $y=(x,z_1,z_2,1)\in\HH\times\mathbb R^3$ if and only if $z_1+\rho z_2^2+(\rho-t)z_2-t +\alpha(f(x)-z_1)+\beta(g(x)-z_2)\ge 0;\textrm{ for every }x\in\HH,\,z_1,z_2\in\mathbb R$.

Finally, note that $\langle C(x,z_1,z_2,0),(x,z_1,z_2,0)\rangle=\alpha\|Ax\|^2_W+\beta\|x\|^2+\rho z_2^2 $ is  nonnegative because $\alpha$ and $\beta$ can be chosen nonnegative.
\end{proof}

Let $A\in L(\mathcal H, \mc{F})$, $b \in \mathcal F,$  $W \in L(\mathcal F)^+$ such that $W^{1/2} \in S_2$ and $\rho>0.$  The regularized least squares problem 
\begin{equation} \label{RTLSp}
	\min_{x \in \HH} \|Ax-b\|^2_W+\rho\|x\|^4,
\end{equation}
always has a solution, because the objective function is convex and coercive. Let $a^*$ be the minimum of \eqref{RTLSp}.	
\begin{corollary} Suppose that $a^*\le \rho$. Then problem \eqref{RTLS} with $T=\rho^{1/2}I$ has a unique solution.
\end{corollary}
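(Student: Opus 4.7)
The plan is to reduce this to Corollary \ref{rho>b} by showing that the hypothesis $a^* \le \rho$ forces $\rho \ge t^*$. The key observation is that the Dinkelbach function
$$\phi(t) = \inf_{x \in \HH}\bigl\{\|Ax-b\|_W^2 + \rho\|x\|^4 + (\rho-t)\|x\|^2 - t\bigr\}$$
introduced just before Corollary \ref{rho>b} has a particularly transparent value at $t = \rho$: the $\|x\|^2$ term drops out and one is left with
$$\phi(\rho) = \inf_{x \in \HH}\bigl\{\|Ax-b\|_W^2 + \rho\|x\|^4\bigr\} - \rho = a^* - \rho.$$

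First I would record this identity explicitly. Next, I would recall from the discussion preceding Corollary \ref{rho>b} that $\phi$ is decreasing on $\mathbb{R}$ and that $t^*$ is its unique root (because $\phi(t^*) = 0$ by definition of $t^*$ and $\phi$ can have at most one zero). Combining these two facts, the hypothesis $a^* \le \rho$ gives $\phi(\rho) \le 0 = \phi(t^*)$, and the monotonicity of $\phi$ yields $\rho \ge t^*$.

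Once $\rho \ge t^*$ is established, Corollary \ref{rho>b} applies directly and delivers existence and uniqueness of a solution to problem \eqref{RTLS} with $T = \rho^{1/2} I$. I do not anticipate a genuine obstacle: the whole argument is a one-line evaluation of $\phi(\rho)$ plus a monotonicity comparison, and no new analytic input beyond what is already contained in the Dinkelbach discussion and Corollary \ref{rho>b} is required.
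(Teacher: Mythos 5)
Your proof is correct, and it takes a genuinely different and considerably more direct route than the paper. The paper proves $t^*\le\rho$ by going through the semidefinite-programming characterization of Proposition \ref{SDP}: it shows that for any $t>\rho$ and any $\alpha,\beta$ the operator $C$ fails to be nonnegative, which requires first disposing of the case $\alpha\ne 1$, then optimizing over $z_2$ and $\beta$, and finally evaluating at the minimizer $\tilde x$ of \eqref{RTLSp}. You instead simply observe that at $t=\rho$ the quadratic term in $\|x\|^2$ cancels in the Dinkelbach function, so that $\phi(\rho)=a^*-\rho\le 0=\phi(t^*)$, and then the (strict) monotonicity of $\phi$ together with the uniqueness of its root forces $\rho\ge t^*$; Corollary \ref{rho>b} finishes the argument. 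The two proofs are morally equivalent --- the paper's upper bound $\langle Cy,y\rangle\le\|Ax-b\|_W^2+(\rho-t)\|x\|^2+\rho\|x\|^4-t$ is essentially the Dinkelbach objective --- but yours avoids the SDP machinery entirely and uses only facts already recorded before Corollary \ref{rho>b}. One small point of care: from $\phi(\rho)\le 0=\phi(t^*)$ and mere (non-strict) decrease of $\phi$ you cannot conclude $\rho\ge t^*$; you need either the strict decrease $\phi(t_2)\le\phi(t_1)-(t_2-t_1)$ for $t_1<t_2$, or equivalently the ``at most one zero'' statement from the paper, to rule out $\rho<t^*$ with $\phi(\rho)=0$. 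Since the paper asserts exactly this uniqueness of the root, your argument is complete as written.
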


\begin{proof}
 It suffices to show that if we take any $t>\rho$ then for every $\alpha,\beta\in\mathbb R$, the matrix $C$ from Lemma \ref{SDP} is not nonnegative.  Indeed, in this case by Lemma \ref{SDP}, $t^*\le\rho$ and by Corollary \ref{rho>b}, the \eqref{RTLS} problem has a unique solution.
	
	Let $y=(x,z_1,z_2,1)\in\HH\times\mathbb R^3$, then
	\begin{align*}
	\langle Cy,y\rangle  = (1-\alpha)z_1+\rho z_2^2+(\rho-t-\beta)z_2-t +\alpha\|Ax-b\|_W^2+\beta\|x\|^2.
	\end{align*}
	Note that if $\alpha\ne 1$ then we can always choose $z_1$ so that $\langle Cy,y\rangle <0$. Suppose $\alpha=1$. Then 
	\begin{align*}
	\langle Cy,y\rangle  = \rho z_2^2+(\rho-t-\beta)z_2-t +\|Ax-b\|_W^2+\beta\|x\|^2.
	\end{align*}
	Taking $z_2=-\frac{\rho-t-\beta}{2\rho}$ then 
  $\langle Cy,y\rangle  = \|Ax-b\|_W^2+\beta\|x\|^2-\frac{(\rho-t-\beta)^2}{4\rho}-t$. Maximizing in $\beta$, 
	\begin{align*}
 \langle Cy,y\rangle \le \|Ax-b\|_W^2+(\rho-t)\|x\|^2+\rho\|x\|^4-t.  
	\end{align*}	
	Let $\tilde x\in\HH$ such that $a^*=\|A\tilde x-b\|^2_W+\rho\|\tilde x\|^4.$ Since $a^* \le \rho,$ for $y=(\tilde x,z_1,-\frac{\rho-t-\beta}{2\rho},1)$ we have 
	\begin{align*}
	\langle Cy,y\rangle  \le \|A\tilde x-b\|_W^2+(\rho-t)\|\tilde x\|^2+\rho\|\tilde x\|^4-t\le (\rho-t)\|\tilde x\|^2+(\rho-t)<0.
	\end{align*}	
	Therefore $C$ is not nonnegative.  
\end{proof}

\section{The restricted regularized total least squares problem} \label{lower-semi}
Let $A\in L(\mathcal H, \mc{F})$, $T\in L(\HH,\mc{E}),$ $b \in \mathcal F$ and $W \in L(\mathcal F)^+$ such that $W^{1/2} \in S_2$ and consider problem \eqref{RTLS}. Suppose that the regularization operator $T$ is invertible. Then, in the finite dimensional case, the existence of solution of problem \eqref{RTLS} is guaranteed by the fact that the objetive function $\|Tx\|^2+\|A-X\|_{2,W}^2+\|Xx-b \|_W^2$ is continuous and coercive on $L(\HH, \mc F) \times \HH$, see e.g. \cite{van1991total}. A natural approach, in the infinite dimensional case would be to minimize  coercive and weakly continuous (or at least weakly lower-semicontinuous) functions.

Since the norm is weakly lower-semicontinuous on any normed space, the first two terms in the objective function of the \eqref{RTLS} problem are  weakly lower-semicontinuous. Note that, if the mapping $(X,x)  \mapsto  Xx$ is (jointly) weakly continuous on bounded sets, then the third term of the objective function $\|Xx-b\|_W^2$ is also weakly lower-semicontinuous.

In \cite{bleyer2013double}, a regularized total least squares problem is studied on infinite dimensional Hilbert spaces. There   an existence theorem  is proved but their proof assumes a crucial property, which as we will see, is not satisfied in many reasonable cases.
Their results follows from an assumption (assumption $(A1)$ in \cite{bleyer2013double}) which translates to the fact that, for example the bilinear mapping defined as
\begin{align*}
    B: \mathcal S_2\times \HH &\to  \HH \\
    (X,x) & \mapsto  Xx,
\end{align*}
is weak-to-norm continuous. But, this is not true: any orthonormal basis  $(e_n)_n$ is weakly null (weak convergent to $0$) in $\mathcal H$ and if $X_n=\langle \cdot, e_n\rangle e_1$  (a rank 1 operator defined on $\mathcal H$) then $(X_n)_n$ is also weakly null in $\mathcal S_2$; indeed if $K\in \mathcal S_2$ 
then $\langle X_n, K\rangle =\tr(\langle \cdot, e_n\rangle e_1 K^*)=\langle K^*e_1, e_n\rangle \to 0$. But $X_ne_n=e_1$ for every $n$, and thus $(X_ne_n)_n$ does not converge to zero in any topology. Therefore $B$ is not weak-to-weak continuous. The same example shows that the function $ (X,x)  \mapsto\|Xx\|^2$ is not weakly lower-semicontinuous.
Moreover, since $(X_n)_n$ also converges to 0 in the strong operator topology (SOT), $B$ is also not $SOT\times$weak to weak continuous.

In order to assure existence of solution we may restrict either the set of operators or the set of vectors to smaller sets which have some kind of compacity. The aim of this section is to show that this is a delicate problem. We present a restricted regularized total least squares problem in a general setting and show some cases in which we can assure the continuity of the bilinear mapping and hence the existence of solution. In the final subsection we show some very natural examples in which the bilinear mapping fails to be continuous.

\bigskip

\subsection{Restricted regularized total least squares problem}

The continuity of the bilinear mapping can be used to prove existence of a regularized total least squares problem when restricted to suitable sets.

Let $E_0,E_1,E_2$ be infinite dimensional Banach spaces, $\mathcal I\subset L(E_0,E_1)$ be any normed ideal of operators and $C\subset \mathcal I$ and $D\subset E_0$ closed convex subsets.

Given $A\in C\subset\mathcal I$, $T\in L(E_0,E_2)$  and $b \in E_1$,
we consider the following \emph{restricted regularized total least squares problem}: find the set of solutions of
\begin{equation} \tag{RRTLS} \label{restricted RTLS}
\underset{X\in C\subset \mathcal I, \ x\in D\subset E_0}{\min }f(\|Tx\|_{E_2},\|A-X\|_{\mathcal I},\|Xx-b \|_{E_1})
\end{equation} 
with $f:\mathbb R_{\ge 0}^3\to\mathbb R_{\ge 0}$ is any continuous, increasing and coercive function. For example, if $f(t_0,t_1,t_2)=t_0^2+t_1^2+t_2^2$, then the function we should minimize is the same as in the previous section.

The most simple situation is when one of the subsets, $C$ or $D$ is norm compact:
\begin{proposition}\label{restringido a compactos}
    Suppose that $T$ is bounded below and that either:
\begin{enumerate}
    \item $C$ is compact, $D=E_0$ with $E_0$ reflexive.
    
    \item $C=\mathcal I$ is a reflexive Banach space of operators 
    and $D$ is compact.
    
    \item 
     $C=\mathcal I=L(E_0,E_1)$ with $E_1$ is reflexive and $D$ is compact.
\end{enumerate}
Then the \eqref{restricted RTLS} problem admits solution.
\end{proposition}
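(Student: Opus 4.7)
The plan is the direct method of the calculus of variations. Let $(X_n,x_n)\in C\times D$ be a minimizing sequence for the objective $J(X,x):=f(\|Tx\|_{E_2},\|A-X\|_\mathcal I,\|Xx-b\|_{E_1})$; I will extract a subsequential limit $(X_0,x_0)\in C\times D$ and show, via lower semicontinuity of norms together with monotonicity and continuity of $f$, that $J(X_0,x_0)$ equals the infimum. Coercivity of $f$ forces each of $\|Tx_n\|$, $\|A-X_n\|_\mathcal I$, $\|X_nx_n-b\|$ to be bounded; since $T$ is bounded below, $\sup_n\|x_n\|<\infty$, and hence $\sup_n\|X_n\|_\mathcal I<\infty$. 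A standard consequence of the normed-ideal axioms (factor the scalar $y'(Xx)$ through the rank-one maps $t\mapsto tx$ and $y'$, then apply property (2) to $\mathcal I$ on the scalar field) gives $\|X\|_{L(E_0,E_1)}\le\|X\|_\mathcal I$, so $\sup_n\|X_n\|_{L(E_0,E_1)}<\infty$ as well.

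In case (1), norm compactness of $C$ produces a subsequence with $X_n\to X_0\in C$ in $\mathcal I$, and reflexivity of $E_0=D$ a further subsequence with $x_n\rightharpoonup x_0$; writing $X_nx_n-X_0x_0=(X_n-X_0)x_0+X_0(x_n-x_0)+(X_n-X_0)(x_n-x_0)$, the three summands are respectively norm-null, weakly null, and norm-null, so $X_nx_n\rightharpoonup X_0x_0$. In case (2), reflexivity of $\mathcal I$ gives $X_n\rightharpoonup X_0\in\mathcal I$ and compactness of $D$ gives $x_n\to x_0\in D$ in norm; the evaluation $X\mapsto Xx_0$ is bounded from $\mathcal I$ to $E_1$ (by the ideal inequality above), hence weak-to-weak continuous, so $X_nx_0\rightharpoonup X_0x_0$, and $X_n(x_n-x_0)\to 0$ in norm, giving $X_nx_n\rightharpoonup X_0x_0$.

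Case (3) is the main obstacle, because $L(E_0,E_1)$ is typically not reflexive and its bounded sets are not sequentially compact in any useful weak operator topology; so extracting a limit of $(X_n)$ itself is impossible in general. I circumvent this by constructing $X_0$ explicitly. Compactness of $D$ gives $x_n\to x_0\in D$, and reflexivity of $E_1$ then yields a subsequence with $X_nx_n\rightharpoonup v\in E_1$. If $x_0=0$, the bound $\|X_nx_n\|\le\|X_n\|_L\|x_n\|\to 0$ forces $v=0$ and $X_0:=A$ works; if $x_0\ne 0$, pick $\phi\in E_0^*$ with $\phi(x_0)=\|x_0\|$ and $\|\phi\|=1$ by Hahn--Banach and set
\[
X_0x\,:=\,Ax+\tfrac{\phi(x)}{\|x_0\|}\bigl(v-Ax_0\bigr),\qquad x\in E_0.
\]
Then $X_0\in L(E_0,E_1)=C$, $X_0x_0=v$, and $\|A-X_0\|_L=\|v-Ax_0\|/\|x_0\|$. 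Since $Ax_n\to Ax_0$ in norm, $X_nx_n-Ax_n\rightharpoonup v-Ax_0$, so weak lower semicontinuity gives $\|v-Ax_0\|\le\liminf_n\|(X_n-A)x_n\|\le\bigl(\liminf_n\|X_n-A\|_L\bigr)\|x_0\|$ (using $\|x_n\|\to\|x_0\|$), whence $\|A-X_0\|_L\le\liminf_n\|A-X_n\|_L$.

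To conclude, in every case one has $x_n\rightharpoonup x_0$ (strongly in (2),(3)), $X_nx_n\rightharpoonup X_0x_0$, and $\|A-X_0\|_\mathcal I\le\liminf\|A-X_n\|_\mathcal I$ (by weak lower semicontinuity of the ideal norm in (1),(2); by the construction in (3)); continuity of $T$ gives $\|Tx_0\|\le\liminf\|Tx_n\|$. Since $f$ is continuous and increasing in each variable, $J(X_0,x_0)\le\liminf_n J(X_n,x_n)=\inf_{C\times D}J$, so $(X_0,x_0)$ is a minimizer. The only technical subtlety is case (3); the Hahn--Banach construction exploits the fact that the objective depends on $X_0$ only through $\|A-X_0\|$ and $X_0x_0$, which is enough to bypass the absence of weak sequential compactness in $L(E_0,E_1)$.
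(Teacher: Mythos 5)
Your proof is correct, and cases (1) and (2) follow essentially the same lines as the paper's argument (coercivity of $f$ plus boundedness below of $T$ to reduce to bounded sets, then joint continuity of $(X,x)\mapsto Xx$ into the weak topology of $E_1$, weak lower semicontinuity of the norms, and compactness), except that you argue sequentially with the direct method while the paper works with nets and topological compactness of the product. The genuine divergence is in case (3): the paper keeps the same template by invoking the $WOT$-compactness of the closed ball of $L(E_0,E_1)$ when $E_1$ is reflexive (together with $WOT$-lower semicontinuity of the operator norm), so it does extract a limit operator, just in a weaker topology; you instead observe that the objective sees $X_0$ only through $\|A-X_0\|$ and $X_0x_0$, extract only a weak limit $v$ of the vectors $X_nx_n$ in the reflexive space $E_1$, and manufacture the competitor $X_0=A+\frac{\phi(\cdot)}{\|x_0\|}(v-Ax_0)$ by Hahn--Banach, verifying $\|A-X_0\|\le\liminf\|A-X_n\|$ directly. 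Your route is more elementary and self-contained (no appeal to $WOT$-compactness of operator balls, no nets), and it isolates nicely why reflexivity of $E_1$ alone suffices; the paper's route is shorter once the $WOT$-compactness fact is granted and generalizes more readily to the weakly equicompact setting of its Proposition 5.3. Your auxiliary observations (the bound $\|X\|_{L(E_0,E_1)}\le\|X\|_{\mathcal I}$ from the ideal axioms, the three-term decomposition of $X_nx_n-X_0x_0$, and the treatment of the branch $x_0=0$) are all sound.
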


\begin{proof}
Let $g(X,x):=f(\|Tx\|_{E_2},\|A-X\|_{\mathcal I},\|Xx-b \|_{E_1})$ and let $B: \mathcal{I} \times E_0 \to  E_1, B(X,x)=Xx.$
Since $f$ is coercive and $T$ is bounded below, $g$ is also coercive.  Thus we may restrict the minimization problem to $\tilde D$ and $\tilde C$, the intersection of $D$ and $C$ with some closed balls, respectively.
\begin{enumerate}
\item The function
    $(X,x) \mapsto  Xx$ is continuous from $(\tilde C,\|\cdot\|_{\mathcal I})\times(\tilde D,w)$ to $(E_1,w)$.  In fact, let $y'\in E_1^*$, let $(X_n)_n\subset \tilde C$ be a norm convergent sequence to $X$ and  $(y_\lambda)_\lambda\subset \tilde D$ be a weak convergent net to $y$.
          Then
    \begin{align*}
    |y'(B(X_n, y_\lambda)-B(X,y))| & \le |y'(B(X_n-X, y_\lambda))| +|y'(B(X, y_\lambda-y))|. 
    \end{align*}
    The first term tends to zero because $X_n\overset{\|\cdot\|}{\longrightarrow}X$, $(y_\lambda)_\lambda$ is bounded and $B$  is norm bounded. 
    The second term approaches to zero because $y'(B(X,\cdot))$ is a continuous linear functional on $E_0$ and $y_\lambda\overset{w}{\to}y$.
    
        Thus, since the norm is a weakly-lower semicontinuous function, the composition $ (X,x) \mapsto  \|Xx-b\|_{E_1}$ is  weakly-lower semicontinuous. Similarly, the function $x\mapsto\|Tx\|_{E_2}$ is weakly-lower semicontinuous because $T$ is weak to weak continuous and $X\mapsto\|A-X\|_{\mathcal I}$ is continuous. Thus $g$ is weakly-lower semicontinuous on $(\tilde C,\|\cdot\|_{\mathcal I})\times(\tilde D,w)$, see \cite[Lemma 1.7]{penot1982semi}.     
    Finally, since $E_0$ is reflexive, $(C,\|\cdot\|_{\mathcal I})\times(\tilde D,w)$ is compact and therefore $g$ attains its minimum. 
    \item 
    The proof is similar, using  that
    $ (X,x) \mapsto  Xx$ is continuous from $(\tilde C,w)\times(\tilde D,\|\cdot\|_{E_0})$ to $(E_1,w)$ and the compactness of $(\tilde C,w)\times(\tilde D,\|\cdot\|_{E_0})$.
    
    \item The proof is the similar,  proving  the continuity of $ (X,x) \mapsto  Xx$  from $(\tilde C,WOT)\times(\tilde D,\|\cdot\|_{E_0})$ to $(E_1,w)$. We must also use the fact that the closed  unit ball of $L(E_0,E_1)$ is WOT-compact when $E_1$ is reflexive.
\end{enumerate}
\end{proof}


  



Whenever $\mathcal I=L(E_0,E_1)$,  the following result allows us to prove the existence of solution of problem \eqref{restricted RTLS} when we ask $C$ a condition which is weaker than compacity, namely weak equicompacity.

The following definition was given in \cite{serrano2006weakly}:
\begin{definition}
A subset $C\subset L(E_0,E_1)$ is said to be  {\it weakly $w_0$-equicompact} if for every weakly null sequence $(y_n)_n\subset E_0$  there exists a subsequence $(y_{n_k})_k$ such that $(Xy_{n_k})_k$ converges weakly uniformly for $X\in C$ to 0.
\end{definition}
\begin{proposition} \label{equicompact}
    Suppose that $T$ is bounded below and that  $C\subset L(E_0,E_1)$ is a closed and convex set which is weakly $w_0$-equicompact set of operators and that $D=E_0$ is a reflexive Banach space.

Then the \eqref{restricted RTLS} problem admits solution.
\end{proposition}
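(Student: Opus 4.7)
The plan is to mirror the sequential argument of Proposition \ref{restringido a compactos}, using the weakly $w_0$-equicompactness of $C$ in place of norm-compactness to recover the joint weak sequential continuity of the bilinear map $B(X,x):=Xx$ needed for weak lower semicontinuity of the objective.

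First I would establish joint coercivity of $g(X,x):=f(\|Tx\|_{E_2},\|A-X\|_{\mathcal I},\|Xx-b\|_{E_1})$ on $C\times E_0$. Since $T$ is bounded below there is $c>0$ with $\|x\|_{E_0}\le c\,\|Tx\|_{E_2}$, and $\|X\|_{\mathcal I}\le \|A\|_{\mathcal I}+\|A-X\|_{\mathcal I}$; together with the monotonicity and coercivity of $f$ this shows any minimizing sequence $(X_n,x_n)\in C\times E_0$ is norm-bounded both in $\mathcal I$ and in $E_0$. By the reflexivity of $E_0$, pass to a subsequence with $x_n\rightharpoonup x_0$ in $E_0$. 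For the operator variable I would use that $C$ is bounded, closed, convex and weakly $w_0$-equicompact to extract a further subsequence with $X_n\to X_0$ in the weak operator topology (WOT); closedness and convexity of $C$ together with a Mazur-type argument place $X_0\in C$.

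The crucial step is the joint weak-sequential continuity of $B$ on $C\times E_0$, i.e.\ the claim that $X_nx_n\rightharpoonup X_0x_0$ in $E_1$. Write
$$
X_nx_n-X_0x_0 \;=\; X_n(x_n-x_0) \;+\; (X_n-X_0)x_0 .
$$
The second summand is weakly null by WOT-convergence of $X_n$ to $X_0$. For the first, $(x_n-x_0)$ is weakly null in $E_0$, and so weak $w_0$-equicompactness of $C$ yields a subsequence along which $X(x_{n_k}-x_0)\rightharpoonup 0$ in $E_1$ \emph{uniformly} for $X\in C$; in particular $X_{n_k}(x_{n_k}-x_0)\rightharpoonup 0$. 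Combining both pieces gives $X_{n_k}x_{n_k}\rightharpoonup X_0x_0$.

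Finally, weak lower semicontinuity of the norm yields $\|Tx_0\|_{E_2}\le\liminf\|Tx_n\|_{E_2}$, $\|A-X_0\|_{\mathcal I}\le\liminf\|A-X_n\|_{\mathcal I}$ and $\|X_0x_0-b\|_{E_1}\le\liminf\|X_{n_k}x_{n_k}-b\|_{E_1}$. Since $f$ is continuous and non-decreasing in each coordinate, $g(X_0,x_0)\le \liminf g(X_n,x_n)=\inf_{C\times E_0} g$, so $(X_0,x_0)$ is the desired minimizer. The main obstacle is the WOT-extraction step for $(X_n)$: one needs that a bounded weakly $w_0$-equicompact family in $L(E_0,E_1)$ is sequentially relatively WOT-compact, and then to combine this with the closedness and convexity of $C$ to place the limit inside $C$. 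This is precisely the technical role of weak $w_0$-equicompactness, and the only point of the proof where structural properties beyond reflexivity of $E_0$ enter.
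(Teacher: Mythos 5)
Your proposal follows the same route as the paper: split $X_nx_n-X_0x_0=X_n(x_n-x_0)+(X_n-X_0)x_0$, use weak $w_0$-equicompactness to make the first summand weakly null (uniformly in the operator) and WOT-convergence for the second, obtain boundedness of minimizing sequences from coercivity via $T$ bounded below, and finish with weak lower semicontinuity of the norms. The paper's proof is the net/topological version of exactly this: it cites \cite[Lemma 2.6]{serrano2006weakly} for the uniform weak convergence of $S(x_n-x)$ over $S\in C$, restricts only the vector variable to a ball, shows $g$ is lower semicontinuous on $(C,WOT)\times(\tilde D,w)$, and concludes from WOT-compactness of $C$.

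The step you flag as ``the main obstacle'' is, however, a genuine gap, and it is precisely where the equicompactness hypothesis must do more work than you assign to it. First, extracting a WOT-convergent subsequence (or subnet) from $(X_n)\subset C$ does not follow from boundedness of $C$: for a bounded family the orbits $\{Xx: X\in C\}$ are merely bounded in $E_1$, and since $E_1$ is not assumed reflexive they need not be relatively weakly compact, so the pointwise weak limits needed to define $X_0$ may simply fail to exist. One needs that weak ($w_0$-)equicompactness forces these orbit sets to be relatively weakly compact in $E_1$ (this is part of the Serrano--Pi\~neiro--Delgado machinery the paper invokes), after which a Tychonoff-type argument yields relative WOT-compactness of $C$. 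Second, your ``Mazur-type argument'' to place $X_0$ in $C$ does not work as stated: Mazur's theorem identifies norm-closed convex sets with sets closed for the weak topology of the Banach space $L(E_0,E_1)$, but the WOT is strictly coarser than that weak topology, and norm-closed convex sets need not be WOT-closed (the ideal of compact operators in $L(\ell_2)$ is a standard counterexample). So WOT-closedness of $C$ must either be part of the compactness statement one proves for weakly equicompact sets, or be imposed separately; it cannot be deduced from norm-closedness and convexity alone. With those two points supplied, the rest of your argument (coercivity, the decomposition of $B$, and the lower semicontinuity bookkeeping) is correct and matches the paper.
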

\begin{proof}
    We first observe that in this case the bilinear mapping $B$ is $(C,WOT)\times(D,w)$ to $(E_1,w)$ continuous. 
     This is a direct consequence of \cite[Lemma 2.6]{serrano2006weakly}, which tells us that $S(x_n-x)\overset{w}{\longrightarrow}0$ uniformly for $S\in C,$ for any $x_n\overset{w}{\longrightarrow} x$.
     
     Now, if $g(X,x):=f(\|Tx\|_{E_2},\|A-X\|,\|Xx-b \|_{E_1})$, then $g$ is coercive.  Thus we may restrict the minimization problem to $C\times\tilde{D}$, where $\tilde{D}$ is the intersection of $D$ with some closed ball.
     Also, since the operator norm is $WOT$ lower semicontinuous, we may proceed as in $(1)$ of Proposition \ref{restringido a compactos} to show that $g$ is lower semicontinuous on $( C,WOT)\times(\tilde{D},w)$.  Since $C$ is closed and convex, it is $WOT$-compact. Therefore, $g$ attains its minimum on $ C\times D$.
\end{proof}

\begin{remark}
Note that the conclusion of Propositions \ref{restringido a compactos} and \ref{equicompact} remain true for arbitrary $T$ if we suppose additionally that $D$ is bounded. In particular, this gives us existence results for the restricted total least squares problem without regularization.
\end{remark}

We present now an example showing that the above result can be applied to assure the existence of solution of problem \eqref{restricted RTLS} on sets of triangular operators.
This example is similar to Example \ref{ejemplo diagonal}, here $b$ is allowed to be any vector in $\ell_2$, but we must restrict to a proper subsets of operators.
\begin{example}
	Let $\mathcal H=\mathcal F=\mathcal E=\ell_2$,  $A\in L(\ell_2)$, $b\in\ell_2$, $T \in L(\ell_2)$ bounded below.  We show that \eqref{restricted RTLS} has solution when we minimize on a set $C_N$ of operators which contains all operators with lower triangular  matrix representations: given $N\ge 0$ let 
	$$
C_N=\{X\in L(\ell_2)\,: \, \langle Xe_j,e_i\rangle =0 \textrm{ for }i<j+N\}. 
	$$
Since $g(X,x):=f(\|Tx\|_2,\|A-X\|,\|Xx-b \|_{2})$ is coercive, we may restrict $x$ and $X$ to some closed balls. 
Thus, by Proposition \ref{equicompact}, it suffices to see that for $r>0$, $r\overline{B_{\mathcal I}}\cap C_N$ is a weakly $w_0$-equicompact set of operators. By  \cite[Corollary 2.3]{serrano2007some} this can be proved if we show that for each $y\in \ell_2$, the sets $(r\overline{B_{\mathcal I}}\cap C_N^{*})y:=\{X^*y: X\in C_N,\,\|X\|\le r\}$
are relatively compact sets in $\ell_2$. This is easily seen applying a classical result of Fr\'echet (see e.g. \cite[Theorem 4]{hanche2010kolmogorov}), according to which it suffices to see that given $\varepsilon>0$, there is some $n$ such that for every $X^*\in r\overline{B_{\mathcal I}}\cap C_N^{*}$,
$$
\sum_{j>n}\langle X^*y,e_j\rangle^2<\varepsilon.
$$
Let $n$ be such that that $\sum_{j>n-N}y_j^2<\varepsilon/r^2$ and denote by $y^{n-N}$ the tail of $y$ so that $\|y^{n-N}\|^2<\varepsilon/r^2.$
 Let $X\in r\overline{B_{\mathcal I}}\cap C_N$, then
$$
\sum_{j>n}\langle X^*y,e_j\rangle^2= \sum_{j>n}\left(\sum_{l>n-N}y_l\langle X^*e_l,e_j\rangle\right)^2=
\sum_{j>n}\langle X^*y^{n-N},e_j\rangle^2 \le \|X^*\|^2\|y^{n-N}\|^2
<\varepsilon.
$$

\end{example}

\medskip

For $\mathcal I=K(E_0,E_1)$, the space of compact operators, we can assure the existence of solution of the \eqref{restricted RTLS} problem restricted to weakly compact sets whenever the space $E_0$ has the Dunford-Pettis property. To achieve this we prove that the bilinear mapping in this case is weakly sequentially continuous. We will actually see in the next proposition that the Dunford-Pettis property characterizes this continuity for the bilinear mapping. 

Recall that a Banach space $E_0$ is said to have the \textit{Dunford-Pettis property} if for
each Banach space $E_1,$ every weakly compact linear operator $S:
E_0 \rightarrow E_1$ is
completely continuous, i.e., $S$ takes weakly compact sets in $E_0$ onto norm
compact sets in $E_1$.  An important characterization for $E_0$ to have the Dunford-Pettis property is that for any weakly null sequences $(x_n)_n$ of $E_0$ and $(y_n')_n$ of the dual space $E_0^*$, the sequence $y_n'(x_n)$ converges to $0.$ See \cite[Theorem 1]{diestel1980survey}.

Some examples of spaces with the
Dunford-Pettis property are $C(K)$ spaces, $L^1(\mu)$-spaces and spaces whose duals
are either $C(K)$ or $L^1(\mu)$-spaces, spaces of analytic functions like $H^\infty$ or the disc algebra or the spaces  of  smooth  functions on the $n$-dimensional torus  $C^{(k)}(\mathbb T^n)$. 
We refer the reader to \cite{diestel1980survey, castillo1994dunford}.

\begin{proposition}\label{prop dunford-pettis}
    Let  $B: K(E_0,E_1) \times E_0 \to E_1$ be the bilinear mapping,
    $$
    B(X,y)=Xy.
    $$
    Then $B$ is weakly sequentially continuous (that is, $B$ sends weakly convergent sequences in $E_0$ and $K(E_0,E_1)$ to a weakly convergent sequence in $E_1$) if and only if $E_0$ is a Banach space with the Dunford-Pettis property.
    
    In this case, if
    $C\subset K(E_0,E_1)$ and  $D\subset E_0$ are a weakly-compact sets then the \eqref{restricted RTLS} problem admits solution.
\end{proposition}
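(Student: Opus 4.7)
My plan is to prove the biconditional first and then deduce the existence statement from the forward direction.

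For the forward implication (DP implies weak sequential continuity of $B$), given $X_n \to X$ weakly in $K(E_0,E_1)$ and $y_n \to y$ weakly in $E_0$, I would use the decomposition
\[
X_n y_n - X y = (X_n - X) y_n + X(y_n - y).
\]
The second summand tends to zero in norm because $X$ is compact, hence completely continuous on weakly convergent sequences. For the first, I would test against $y' \in E_1^*$ and split $y_n = y + (y_n - y)$ to obtain
\[
y'((X_n - X) y_n) = ((X_n - X)^* y')(y_n - y) + y'((X_n - X) y).
\]
The last term vanishes because $Z \mapsto y'(Z y)$ is a bounded linear functional on $K(E_0, E_1)$, hence weak-to-weak continuous. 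The delicate step---which I expect to be the main obstacle---is the first term: one needs that $Z \mapsto Z^* y'$ is bounded linear from $K(E_0, E_1)$ into $E_0^*$ (hence weak-to-weak continuous, giving $(X_n - X)^* y' \to 0$ weakly in $E_0^*$), after which the characterization of DP via bilinear evaluation of weakly null sequences in $E_0$ and $E_0^*$ yields $((X_n - X)^* y')(y_n - y) \to 0$. The subtlety is that $y_n$ is only weakly convergent, not weakly null, which forces the splitting above and the use of two different auxiliary bounded maps to extract genuinely weakly null factors in both slots.

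For the converse, I would encode any failure of DP as a failure of weak sequential continuity of $B$. If $E_0$ lacks DP, pick weakly null sequences $(x_n) \subset E_0$ and $(x_n') \subset E_0^*$ with $|x_n'(x_n)| \ge \delta > 0$ (after passing to a subsequence), choose a nonzero $z \in E_1$, and set $X_n := x_n'(\cdot) z \in K(E_0, E_1)$. Since $y' \mapsto y'(\cdot) z$ is bounded linear from $E_0^*$ into $K(E_0, E_1)$, hence weak-to-weak continuous, $(X_n)$ is weakly null in $K(E_0, E_1)$. However $B(X_n, x_n) = x_n'(x_n) z$ has norm at least $\delta \|z\| > 0$ and therefore cannot converge weakly to $0$, contradicting the assumed weak sequential continuity of $B$.

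For existence, I would combine the forward direction with the Eberlein--\v{S}mulian theorem: weakly compact subsets of Banach spaces are weakly sequentially compact, so $C \times D$ is weakly sequentially compact (and automatically norm bounded, so coerciveness of $f$ plays no role here). I would then show that $g(X, x) := f(\|Tx\|, \|A - X\|, \|Xx - b\|)$ is weakly sequentially lower semicontinuous on $C \times D$: the three norm factors are weakly sequentially LSC (the third because $B$ is weakly sequentially continuous by the forward direction, and the norm is weakly LSC), and a standard extraction of subsequences together with the continuity and coordinatewise monotonicity of $f$ transfers this to $g$. A minimizing sequence then admits a weakly convergent subsequence whose limit attains the minimum.
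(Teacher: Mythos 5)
Your proof is correct and follows the same overall strategy as the paper's (decomposition of $X_ny_n-Xy$ into a ``moving operator'' term and a ``moving vector'' term, a rank-one construction for the converse, and Eberlein--\v{S}mulian plus weak sequential lower semicontinuity for existence), but two of your local arguments are genuinely different and in fact more elementary. First, for the cross term you split $y_n=y+(y_n-y)$ so that both slots become weakly null, which lets you invoke only the standard bilinear characterization of the Dunford--Pettis property; the paper instead evaluates the weakly null sequence $\bigl((X_n-X)^*y'\bigr)_n$ directly at $y_n$ and justifies this by the fact that weakly compact subsets of a space with the Dunford--Pettis property are Dunford--Pettis sets (citing Aqzzouz--Bouras). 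Your route avoids that machinery at the cost of one extra, harmless term $y'((X_n-X)y)$. Second, in the converse you deduce that $(X_n)_n=(x_n'(\cdot)z)_n$ is weakly null in $K(E_0,E_1)$ from the weak-to-weak continuity of the bounded linear embedding $y'\mapsto y'(\cdot)z$ of $E_0^*$ into $K(E_0,E_1)$; the paper instead verifies $WOT^*$-convergence and then appeals to Kalton's theorem on weak convergence in spaces of compact operators. Your argument is shorter and avoids Kalton's result entirely. One small imprecision: at the end of the converse you claim that $B(X_n,x_n)=x_n'(x_n)z$ ``has norm at least $\delta\|z\|$ and therefore cannot converge weakly to $0$''; being bounded away from zero in norm does not in general preclude weak nullity, but here it does because the sequence lies in the one-dimensional span of $z$ (test against a functional $y'$ with $y'(z)\neq 0$, or pass to a further subsequence with $x_n'(x_n)\to c\neq 0$). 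The paper sidesteps this by normalizing $x_n'(x_n)\to 1$, which gives norm convergence of $B(X_n,x_n)$ to $z\neq 0$.
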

\begin{proof}
Suppose first that $E_0$ lacks the Dunford-Pettis property. Then, there are weakly null sequences $(y_n)_n\subset E_0$ and $(y_n')_n\subset E_0^*$ such that $y_n'(y_n)\to 1$. Take any nonzero vector $y_1\in E_1$ and define $X_n(y)=y_n'(y)y_1$. Thus, $z(X_n^*(y'))=z(y_n')y'(y_1)\to 0$ for every $z\in E_0^{**}$ and every $y'\in E_1^*$. Thus $X_n\overset{WOT*}{\longrightarrow}0$.  By \cite[Corollary 3]{kalton1974spaces}, $(T_n)_n$ converges weakly to $0$.
    
    On the other hand, $B(X_n,y_n)=y_n'(y_n)y_1\to y_1\ne B(w-\lim X_n,w-\lim y_n)=B(0,0)=0$. Therefore, $B$ is not weakly sequentially continuous.

    Conversely, suppose now that $E_0$ has the Dunford-Pettis property. Since $D\subset E_0$ is weakly compact then for each sequence $(y_n)_n\subset D$, $y_n'(y_n)\to 0$ for every weakly null sequence $(y_n')_n$. See \cite[Proposition 2.1, Proposition 2.3]{aqzzouz2017dunford} and \cite{diestel1980survey}.
    
    Let $y_n\overset{w}{\longrightarrow} y$, $X_n\overset{w}{\longrightarrow} X$ and take $y'\in E_1^*$. Since weak convergence implies $WOT*$ convergence,  it is not difficult to see that $(y'\circ(X_n-X))_n$ is a weakly null sequence in $E_0^*$, and thus 
    $$
    y'\circ (X_n-X)(y_n)\to 0.
    $$
    Therefore $y'(B(X_n,y_n)-B(X,y))=y'(B(X_n,y_n)-B(X,y_n))+y'(B(X,y_n)-B(X,y))=y'\circ(X_n-X)(y_n)+(y'\circ X)(y_n-y)\to 0.$ Hence,  $B$ is weakly sequentially continuous.
    
    In this case, if $C\subset K(E_0,E_1)$ and  $D\subset E_0$ are weakly-compact sets then the \eqref{restricted RTLS} problem admits solution. In fact, 
    since the norm is a weakly-lower semicontinuous function, the composition $ (X,x) \mapsto  \|Xx-b\|_{E_1}$ is sequentially weakly-lower semicontinuous. Then, if $g(X,x)=f(\|Tx\|_{E_2},\|A-X\|,\|Xx-b \|_{E_1})$, then $g$ is sequentially weakly-lower semicontinuous on $(C,w)\times(D,w)$, see \cite[Lemma 1.7]{penot1982semi}.     
    Finally, since $C$ and $D$ are weakly compact, by the Eberlein-Smulian Theorem, they are sequentially weakly compact and therefore $g$ attains its minimum.
\end{proof}

In the previous theorem we can drop the hypothesis that  $X \in K(E_0,E_1)$ to prove that $B$ is weakly sequentially continuous. That is, consider 
$B: L(E_0,E_1) \times E_0 \to E_1$ the bilinear mapping,
    $$
    B(X,y)=Xy,
    $$
and $E_0$ a Banach space with the Dunford-Pettis property. Then, for any weakly-compact sets
$C\subset L(E_0,E_1)$ and  $D\subset E_0$, the \eqref{restricted RTLS} problem admits solution. Note also that since $T$ need not to be bounded below here, we can also conclude the existence of solution for the restricted total least squares problem without regularization.

\subsection{A variant to the regularized total least squares problem}

We now briefly present a modification to the regularized total least squares problem which gives rise to a weak  continuous bilinear mapping. 

Let $E_0,E_1,E_2,E_3$ be infinite dimensional Banach spaces, $\mathcal I\subset L(E_1,E_2)$ be any normed ideal of operators. 

Given $A\in \mathcal I$, $T\in L(E_0,E_3)$, $K\in K(E_0,E_1)$ a compact operator  and $b \in E_2$,
we may consider the following problem: find the set of solutions of
\begin{equation*} 
\underset{X\in \mathcal I, \ x\in E_0}{\min }f(\|Tx\|_{E_3},\|A-X\|_{\mathcal I},\|XKx-b \|_{E_2})
\end{equation*} 
with $f:\mathbb R_{\ge 0}^3\to\mathbb R_{\ge 0}$ is any continuous, increasing and coercive function.

The fact that $K$ is a fixed compact operator implies that $(Kx_\lambda)_\lambda$ is norm convergent for any weakly convergent net $(x_\lambda)_\lambda$. Thus the bilinear mapping $(X,x)\mapsto XKx$ is weak-to-weak continuous. Therefore, proceeding as in Proposition \ref{restringido a compactos} (2) and (3), it can be shown that the above problem admits solution whenever $\mathcal I$ and $E_0$ are reflexive or $\mathcal I=L(E_1,E_2)$ and $E_0$ and $E_2$ are reflexive.

A typical example where this result can be applied is when $E_0$ is a Sobolev space, $E_1$ is an appropriate $L^p$ space and $K$ is the inclusion $E_0\hookrightarrow E_1$. In this case, the well-known Rellich-Kondrachov theorem assures the compactness of the inclusion.

\subsection{Counterexamples to the weak continuity of the bilinear mapping}

Restricting the \eqref{RTLS} problem to different types of operators we would obtain slightly different bilinear mappings. The following examples show that these bilinear mappings are usually not weak-to-weak continuous.
\begin{example}
    On the Hilbert space $L^2(\Omega)$, consider an integral Hilbert-Schmidt operator,
    $$
    A_0f(s)=\int_{\Omega}k_0(s,t)f(t)dt,
    $$
    where $k_0\in L^2(\Omega^2)$. Recall that $\|A_0\|_2=\|k_0\|_{L^2(\Omega^2)}$. We show that the bilinear mapping
    \begin{align*}
    B:L^2(\Omega^2)\times L^2(\Omega) &\to L^2(\Omega) \\
    (k,f) & \mapsto  B(k,f)(s)=\int_{\Omega}k(s,t)f(t)dt,
\end{align*}
is not weak-to-norm continuous. Let $(e_n)_n$ be any orthonormal basis of $L^2(\Omega)$ (thus a weakly null sequence) and take $g\in L^2(\Omega)$. Define $k_n(s,t)=g(s)e_n(t)$. It is easy to see that $(k_n)_n$ is weakly null in $L^2(\Omega^2)$. 

But 
$$
B(k_n,e_n)(s)=g(s)\int_{\Omega}e_n(t)e_n(t)dt=g(s).
$$
Therefore, $k_n\overset{w}{\to}0$ and $e_n\overset{w}{\to}0$ but $B(k_n,e_n)=g$ for every $n$.
\end{example}

\smallskip
Recently, in \cite{sixou2019kullback} it was proposed to study a variant of the problem by restricting the set of vectors to the weakly compact set $D\subset L^1$, consisting on all the functions whose essential image is contained in $[d_1,d_2]$, with $0<d_1<d_2$. We see in the following example that, in the context of integral operators, the weak-to-weak continuity of the bilinear mapping $L^2\times D\to L^1$ is not satisfied.

\begin{example}
 On $L^p(\Omega)$, $\Omega=[0,2\pi]$, consider the problem \eqref{restricted RTLS} associated to an integral operator,
    $$
    A_0f(s)=\int_{\Omega}k_0(s,t)f(t)dt,
    $$
    where $k_0\in L^q(\Omega^2)$, for $1\le p,q < \infty$. We prove that, for some weakly compact subsets $C\subset L^q(\Omega^2)$ and $D\subset L^p(\Omega)$, the bilinear mapping     \begin{align*}
    B:C\times D &\to L^p(\Omega) \\
    (k,f) & \mapsto  B(k,f)(s)=\int_{\Omega}k(s,t)f(t)dt,
\end{align*}
is not weak-to-weak continuous. 

In fact, let $f_n(t)=2-\cos(nt)\in L^p([0,2\pi])$ and $k_n(s,t)=2+\cos(nt)\in L^q([0,2\pi]^2)$, for arbitrary $p,q$. Note that both $f_n$ and $k_n$ are uniformly bounded above and below by 3 and 1 respectively, thus both sequences are contained in weakly compact sets. Moreover, since  $\cos(nt)$ converge weakly to 0 in  $ L^p([0,2\pi])$ for any $p$, we have that 
$$
f_n\overset{w}{\to} 2, \qquad\textrm{and}\qquad k_n\overset{w}{\to} 2,
$$
(where 2 denotes the constant function). Then 
$$
B(2,2)=B(w -\lim k_n,w-\lim f_n)=\int_0^{2\pi}4=8\pi.
$$
But, on the other hand, 
$$
B(k_n,f_n)(s)=\int_0^{2\pi}(2+\cos(n t))(2-\cos(n t))dt=8\pi-\int_0^{2\pi}\cos^2(n t)dt=7\pi.
$$
Therefore, $B$ is not weak to weak continuous.

 Note also that a similar reasoning may have be done when either $p$ or $q$ equal $\infty$ replacing the weak topology by the weak$^*$ topology.
\end{example}

\begin{remark}
The above example shows, once again, that the existence of solution of \eqref{restricted RTLS} is a subtle problem. 
Indeed, since $L^1(\Omega)$ has the  Dunford-Pettis property, if we take in the above example $C$ to be a weakly compact set of bounded operators on $L^1(\Omega)$ (instead of a weakly compact set of kernels in $L^q(\Omega^2)$) then, by Proposition \ref{prop dunford-pettis},  the bilinear mapping has the necessary continuity in order to prove the existence of solution of \eqref{restricted RTLS}. 
\end{remark}



\bigskip
To assure the existence of solution for problem \eqref{restricted RTLS}, in the case when non-reflexive spaces are involved,  one would have to prove that the associated bilinear mapping is weak$^*$-continuous (or $WOT$-continuous in the case of $L(\HH)$).
Our last example shows that if we do not restrict the domain then the bilinear mapping $B(X,y)=Xy$ lacks the desired continuity  
on a very general situation.
\begin{example}
Let $E_0$ be any infinite dimensional Banach space.
Then, there is a weakly null net $(y_\lambda)_\lambda$ contained in the sphere of $E_0$. Take now $y_\lambda'\in E_0^*$ of norm 1 such that $y_\lambda'(y_\lambda)=1$. By taking a subnet, we may suppose that $y_\lambda'$ converge in the weak$^*$ topology to some $y'$ in the closed unit ball of $E_0^*$. Take $0\ne y_1\in E_1$. Then $(y_\lambda'(\cdot)y_1)_\lambda$ is a  net in $L(E_0,E_1)$ that is $WOT$-convergent to $y'(\cdot)y_1$. Moreover $B(y_\lambda'(\cdot)y_1,y_\lambda)=y_1$ for every $\lambda$ and $B(y'(\cdot)y_1,0)=0$. Therefore $B$ is not $WOT\times $weak  continuous.
\end{example}

\section*{Acknowledgments}
Maximiliano Contino was supported by the UBA's Strategic Research Fund 2018 and CONICET PIP 0168.  Alejandra Maestripieri was supported by CONICET PIP 0168.  
Guillermina Fongi was supported by PICT 2017 0883.	
\goodbreak

%

\end{document}